\documentclass[a4paper,11pt]{article}
\usepackage[utf8]{inputenc}
\usepackage{a4wide}
\usepackage{latexsym,amsfonts,amsmath,amssymb,mathrsfs,url,amsthm}
\usepackage{mathtools}
\usepackage{color,graphicx}
\usepackage{lipsum}
\usepackage{hyperref}
\usepackage{xcolor}
\usepackage{cleveref}
\usepackage{bm}
\usepackage{comment}

\newtheorem{theorem}{Theorem}[section]
\newtheorem{lemma}[theorem]{Lemma}

\newtheorem{algorithm}[theorem]{Algorithm}
\newtheorem{remark}[theorem]{Remark}

\newtheorem{corollary}[theorem]{Corollary}
\providecommand{\keywords}[1]
{
  \noindent \small	
  \textbf{Keywords:} #1
}
\providecommand{\amscode}[1]
{
  \noindent \small	
  \textbf{AMS subject classifications:} #1
}
\newcommand{\rd}{\, \mathrm{d}}

\newcommand{\bszero}{\boldsymbol{0}}
\newcommand{\bsh}{\boldsymbol{h}}

\newcommand{\bsk}{\boldsymbol{k}}
\newcommand{\bsl}{\boldsymbol{\ell}}

\newcommand{\bsx}{\boldsymbol{x}}

\newcommand{\bsz}{\boldsymbol{z}}
\newcommand{\bsgamma}{\boldsymbol{\gamma}}
\newcommand{\bsDelta}{\boldsymbol{\Delta}}

\newcommand{\NN}{\mathbb{N}}

\newcommand{\RR}{\mathbb{R}}
\newcommand{\ZZ}{\mathbb{Z}}
\newcommand{\Acal}{\mathcal{A}}
\newcommand{\Hcal}{\mathcal{H}}

\newcommand{\tmod}[1]{{\;(\mathrm{mod}\; #1)}}

\newcommand{\bsone}{\boldsymbol{1}}

\DeclareMathOperator{\supp}{supp}
\DeclareMathOperator{\proj}{proj}

\newcommand{\icomp}{\mathtt{i}}
\newcommand{\abs}[1]{\left\vert#1\right\vert}
\newcommand{\norm}[1]{\left\Vert#1\right\Vert}

\allowdisplaybreaks

\mathtoolsset{showonlyrefs=true}

\title{Worst-case $L_p$-approximation of periodic functions using median lattice algorithms}
\author{Zexin Pan\thanks{Institute of Fundamental and Transdisciplinary Research, Zhejiang University, 866 Yuhangtang Road, Xihu District, Hangzhou, Zhejiang Province, 310058, China (\url{zep002@zju.edu.cn})}, Mou Cai\thanks{Graduate School of Engineering, The University of Tokyo, 7-3-1 Hongo, Bunkyo-ku, Tokyo 113-8656, Japan (\url{caimoumou@g.ecc.u-tokyo.ac.jp}; \url{goda@frcer.t.u-tokyo.ac.jp}).}, Josef Dick\thanks{nice affiliation}, Takashi Goda\footnotemark[2], Peter Kritzer\thanks{RICAM, Austrian Academy of Sciences, Altenbergerstr. 69, 4040 Linz, Austria (\url{peter.kritzer@oeaw.ac.at})}}
\date{\today}

\begin{document}

\maketitle

\sloppy

\centerline{{\it Dedicated to Henryk Wo\'zniakowski on the occasion of his 80th birthday.}}

\begin{abstract}
We study the worst-case approximation of multivariate periodic functions from the weighted Korobov space $H_{d,\alpha,\gamma}$ with smoothness $\alpha>1/2$ in the Lebesgue norm $L_p([0,1]^d)$ for $1\le p\le\infty$. We analyze a \emph{median lattice algorithm} that reconstructs a truncated Fourier series by approximating the coefficients on a hyperbolic-cross-type index set using $R$ rank-1 lattice sampling rules with independent randomly chosen generating vectors, and then aggregating the resulting coefficient estimators via the componentwise median. For an odd number of repetitions $R>1$ and an odd prime lattice size $N$, we prove high-probability error bounds in both $L_\infty$ and $L_2$. Interpolation then yields the result for all $1 \le p\le\infty$. In particular, with a high probability, the algorithm satisfies
\[
\mathrm{err}(H_{d,\alpha,\gamma},L_p,A)\ \le\ C_{d,\alpha,\beta,\bsgamma,p}\, N^{- \alpha + (\frac12 - \frac1p)_+ + \beta },
\qquad 1 \le p\le\infty,\ \beta>0,
\]
where $(x)_+ = \max\{x, 0\}$, $N$ is the number of function evaluations, and the weights $\bsgamma$ and the constant $C_{d,\alpha,\beta,\bsgamma,p}$ are independent of $N$. For $p=\infty$, $C_{d,\alpha,\beta,\bsgamma,\infty}$ is dimension-independent under the summability condition $\sum_{j=1}^\infty \gamma_j^{1/(2\alpha)}<\infty$. These results extend recent analyses of median-based lattice approximation in $L_2$ and complement related multiple-shift lattice approaches, showing that median aggregation yields nearly optimal $L_p$-approximation rates (up to logarithmic factors and an arbitrarily small loss) in weighted Korobov spaces.
\end{abstract}

\keywords{multivariate approximation, lattice rule, weighted Korobov space}

\amscode{42A10, 65D15, 65T40}


\section{Introduction}

Rank-1 lattice rules are a classical and powerful tool in quasi-Monte Carlo (QMC) methods for high-dimensional problems. They have been developed extensively for numerical integration in periodic function spaces, with a mature theory covering existence, construction, randomization, and tractability; see, e.g., \cite{DKP22,DKS13,N92,SJ94}. In particular, for weighted Korobov spaces, component-by-component (CBC) constructions and their fast variants provide lattice rules that achieve optimal or near-optimal convergence rates under natural summability assumptions on the weights \cite{D04,DG21,DSWW06,K03,NC06,SR02}. Randomized lattice rules further yield sharp root-mean-square error bounds and often improve robustness with respect to unknown problem structure \cite{DGS22,G26,GK26,KKNU19,KNW23}.

Beyond integration, lattice sampling has become an increasingly important paradigm for \emph{approximation} and \emph{reconstruction} of multivariate periodic functions. Early investigations demonstrated how lattice information can be used to recover trigonometric polynomials and to design efficient spectral algorithms \cite{KSW06,LH03}, see also \cite{HW63}. More recent work established systematic lattice-based approximation schemes in periodic spaces with general weights together with fast CBC constructions for approximation settings \cite{CKNS20,CKNS21,KMNN21}. Complementary strands of research analyze approximation based on single or multiple rank-1 lattices from a harmonic analysis viewpoint, emphasizing sampling stability and explicit discretizations for multivariate trigonometric polynomials \cite{BKTV17,K19,KPV15,KV19}. The continuing interest in this topic is also reflected in recent advances on subsampled and structured lattices aimed at optimal $L_2$ approximation error convergence \cite{BGKS24}.

A key challenge in lattice-based approximation is to control aliasing and to obtain \emph{high-probability} guarantees for randomized constructions with minimal overhead. In the wider randomized numerical analysis and QMC literature, the \emph{median} and \emph{median-of-means} principles have emerged as effective tools for turning average-case or second-moment bounds into sharp tail estimates without requiring precise knowledge of smoothness parameters. This idea goes back to robust estimation in Monte Carlo contexts \cite{KNR19,NP09} and has recently been used to construct smoothness-adaptive or nearly universal randomized QMC strategies for integration \cite{GK26,GL22,GSM24,P26,PO23,PO24}. These developments suggest that median aggregation is a natural candidate for stabilizing lattice-based \emph{approximation} algorithms as well.

The present note fits into this emerging line of research. Building on the $L_2$-approximation analysis of median lattice algorithms in Korobov spaces from \cite{PKG25} (see also \cite{PGK25}) and the recent multiple-shift lattice approximation framework of \cite{CDG25}, we investigate the worst-case $L_p$-approximation for the full range $1 \le p \le \infty$. We consider a randomized median lattice algorithm that approximates Fourier coefficients on a weighted hyperbolic-cross-type index set by $R$ independent rank-1 lattice rules with a prime number of points, and then applies the componentwise median across the repetitions. The main result shows that this approach yields, with high probability, nearly optimal convergence rates in $L_p$, with the expected logarithmic factor associated with hyperbolic-cross truncation and an arbitrarily small loss parameter $\beta>0$. In the endpoint case $p=\infty$, we additionally obtain dimension-independent constants under standard summability conditions on the product weights, mirroring the behavior known from related integration and approximation settings in weighted Korobov spaces \cite{DKP22,KKNU19,KMNN21}.

Our main result is as follows.
\begin{theorem}\label{thm:main}
Let $\alpha>1/2$ and $d \in \NN$, and let $\gamma_j \in (0,1]$ for $j \in \{1, \ldots, d\}$.
Let $A$ be the approximation algorithm given by Algorithm~\ref{alg:median} with a parameter $\tau>0$, an odd number $R>1 $, and a prime number $N$. Let $\varepsilon_2$ be given by \eqref{eqn:failprob2} and assume it is at most 1.
Then for any $1 \le p \le \infty$, with probability at least $1 - \varepsilon_2$, the worst-case $L_p$-approximation error of $A$ for function approximation in the weighted periodic Korobov space $\Hcal_{d, \alpha, \bsgamma}$ given by \eqref{Korspace} satisfies
$$    \mathrm{err}(\Hcal_{d,\alpha,\bsgamma}, L_p, A) \leq     C_{d, \alpha, \beta, \bsgamma, \tau, R,p}\,  N^{ - \alpha + (\frac12 - \frac1p)_+ + \beta },$$ for any $\beta > 0$, where $(x)_+ = \max\{x, 0\}$, $C_{d, \alpha, \beta, \bsgamma, R,\tau,p} > 0$ is a constant independent of $N$ but dependent on $d$. For $p= \infty$, the constant is independent of the dimension if $\sum_{j=1}^\infty \gamma_j^{1/(2\alpha)} < \infty$.
\end{theorem}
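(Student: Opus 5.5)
We prove the two endpoint bounds, for $L_\infty$ and for $L_2$, on a single high-probability event and then interpolate between them.

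\textbf{Setup and error split.} Write $\mathcal{A}_M=\{\bsh\in\ZZ^d: r_{\alpha,\bsgamma}(\bsh)\le M\}$ for the weighted hyperbolic-cross index set used by Algorithm~\ref{alg:median}, where $r_{\alpha,\bsgamma}(\bsh)=\prod_{j:h_j\neq 0}\gamma_j^{-1}|h_j|^{2\alpha}$ and $M\asymp N^{1-\tau'}$ is tied to $\tau$. For $f\in\Hcal_{d,\alpha,\bsgamma}$ with $\|f\|\le1$ we split
\[
f-A(f)=\sum_{\bsh\notin\mathcal{A}_M}\hat f(\bsh)e_{\bsh}+\sum_{\bsh\in\mathcal{A}_M}\bigl(\hat f(\bsh)-\tilde f(\bsh)\bigr)e_{\bsh},
\]
where $\tilde f(\bsh)$ is the componentwise median over the $R$ lattice estimates.

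\textbf{Truncation error.} By Cauchy--Schwarz the $L_\infty$ truncation error is at most $\bigl(\sum_{\bsh\notin\mathcal{A}_M}r_{\alpha,\bsgamma}(\bsh)^{-1}\bigr)^{1/2}$. The standard tail estimate for $1/(2\alpha)<\lambda<1$ gives the bound $M^{-(1-\lambda)/2}\prod_j(1+2\zeta(2\alpha\lambda)\gamma_j^{\lambda})^{1/2}$. Taking $\lambda=1/(2\alpha)+\delta$ yields the rate $N^{-\alpha+1/2+\beta}$ in the normalization $r\le N^{2\alpha}$-type. The product is bounded independently of $d$ when $\sum_j\gamma_j^{1/(2\alpha)+\delta}<\infty$, which follows from $\sum_j\gamma_j^{1/(2\alpha)}<\infty$ because $\gamma_j\le1$. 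The $L_2$ truncation error is simply $M^{-1/2}$, which gives $N^{-\alpha+\beta}$.

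\textbf{Aliasing error, the main obstacle.} For a single lattice with generator $\bsz$ the estimate error is $\sum_{\bsl\neq\bszero,\ \bsl\cdot\bsz\equiv0\ (N)}\hat f(\bsh+\bsl)$. Averaging over uniformly random $\bsz\in\{1,\dots,N-1\}^d$, with $N$ prime, gives $\mathbb{P}(\bsl\cdot\bsz\equiv0)\le 1/(N-1)$ for $\bsl\not\equiv\bszero$. A Markov bound on a weighted $\ell_1$-type quantity then shows that each single estimate satisfies $|\hat f(\bsh)-\hat f_r(\bsh)|\le$ the target bound times $\|f\|$, uniformly in $f$, with probability at least $3/4$. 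The key point is that this bound must be made independent of $f$: use $|\sum\hat f(\bsh+\bsl)|\le \|f\|\bigl(\sum_{\bsl}r(\bsh+\bsl)^{-1}\bigr)^{1/2}$, and bound the random variable $\sum_{\bsl\in L^\perp\setminus 0}r(\bsh+\bsl)^{-\lambda}$ in expectation through Jensen's inequality with exponent $\lambda$. The median over $R$ independent repetitions then fails only if at least $(R+1)/2$ repetitions fail, so the failure probability is at most $\binom{R}{(R+1)/2}4^{-(R+1)/2}$ per $\bsh$. A union bound over $|\mathcal{A}_M|$, or more precisely the form given in \eqref{eqn:failprob2}, produces $\varepsilon_2$.

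\textbf{Summing the aliasing errors.} On the good event the coefficient errors are at most $\varepsilon_{\bsh}$. For $L_\infty$ we sum $\sum_{\bsh\in\mathcal{A}_M}\varepsilon_{\bsh}$ by again using weighted sums with exponent $\lambda$, which keeps dimension independence. For $L_2$ we use $\bigl(\sum\varepsilon_{\bsh}^2\bigr)^{1/2}$, exploiting the $r(\bsh+\bsl)^{-1}$ decay to gain the additional $N^{-1/2}$.

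\textbf{Interpolation.} For $2\le p\le\infty$ we use $\|g\|_p\le\|g\|_2^{2/p}\|g\|_\infty^{1-2/p}$ for the error function $g$, and for $1\le p\le2$ we use $\|g\|_p\le\|g\|_2$. Both endpoint bounds hold on the same event of probability $1-\varepsilon_2$, so combining them gives $N^{-\alpha+(1/2-1/p)_++\beta}$.
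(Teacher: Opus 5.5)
The gap is in your aliasing step, and it costs a factor of order $N^{1/2}$ at both endpoints. You bound each coefficient error uniformly over the unit ball: your $\varepsilon_{\bsh}$ must dominate $\sup_{\norm{f}_{d,\alpha,\bsgamma}\le 1}\abs{\widehat f_{N,\bsz_{\{1{:}R\}}}(\bsh)-\widehat f(\bsh)}$. Only afterwards do you sum $\varepsilon_{\bsh}$ (for $L_\infty$) or $\varepsilon_{\bsh}^2$ (for $L_2$) over $\bsh\in\Acal_{d,\alpha,\bsgamma}(N_2)$. Taking the supremum over $f$ frequency by frequency gives every $\bsh$ the full norm budget of $f$, and no choice of weights repairs this.

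To see this, note that $(N,0,\ldots,0)\in P^\perp_{N,\bsz}$ for every $\bsz$. These deterministic aliases $\bsl\in N\ZZ^d\setminus\{\bszero\}$ are exactly where your bound $\Pr(\bsl\cdot\bsz\equiv 0 \tmod{N})\le 1/(N-1)$ is unavailable. Take $\bsh=(h_1,0,\ldots,0)\in\Acal_{d,\alpha,\bsgamma}(N_2)$ with $h_1\neq 0$, and the unit-norm function $f(\bsx)=\sqrt{2}\,\gamma_1^{1/2}\abs{h_1+N}^{-\alpha}\cos(2\pi(h_1+N)x_1)$. It produces the same error $\gamma_1^{1/2}\abs{h_1+N}^{-\alpha}/\sqrt{2}\ge \gamma_1^{1/2}(3N/2)^{-\alpha}/\sqrt{2}$ in every repetition, hence also after the median. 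There are of order $\gamma_1^{1/(2\alpha)}N_2$ such $\bsh$, so necessarily $\sum_{\bsh}\varepsilon_{\bsh}\gtrsim N_2N^{-\alpha}$ and $\bigl(\sum_{\bsh}\varepsilon_{\bsh}^2\bigr)^{1/2}\gtrsim N_2^{1/2}N^{-\alpha}$. Since $N_2=(N-1)/(e^{1/\tau}P_{N,d}(\tau,\alpha,\bsgamma))$ is $N$ up to logarithmic factors for fixed $d$, your argument gives at best about $N^{1-\alpha}$ in $L_\infty$ and $N^{1/2-\alpha}$ in $L_2$. The theorem needs $N^{1/2-\alpha}$ and $N^{-\alpha}$. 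The hoped-for ``additional $N^{-1/2}$'' cannot come out of $\bigl(\sum_{\bsh}\varepsilon_{\bsh}^2\bigr)^{1/2}$.

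The missing idea, which is what the paper does, is to make the good event combinatorial and independent of $f$, and to take the supremum over $f$ only after summing over $\bsh$. The good event is that each $\bsh\in\Acal_{d,\alpha,\bsgamma}(N_2)$ lies in $K_{d,\alpha,\bsgamma}(\bsz_r)$ for at least $\lceil R/2\rceil$ indices $r$. For $L_2$ these indices must also satisfy $\rho_{2\alpha,\bsgamma}(\bsz_r)\ge N_2^{2\alpha}$, which holds whenever $\bszero\in K_{d,\alpha,\bsgamma}(\bsz_r)$. On this event, Lemma~\ref{lem:medtosum} replaces the median by a sum over good repetitions. After swapping the sums over $\bsh$ and $r$, the aliases $\bsh+\bsl$ with $\bsh\in K_{d,\alpha,\bsgamma}(\bsz_r)$ and $\bsl\in P^\perp_{N,\bsz_r}\setminus\{\bszero\}$ are pairwise distinct and lie outside $\Acal_{d,\alpha,\bsgamma}(N_2)$.
\begin{itemize}
\item For $L_\infty$, this bounds the total aliasing by $2R\sum_{\bsk\notin\Acal_{d,\alpha,\bsgamma}(N_2)}\abs{\widehat f(\bsk)}$, followed by a single Cauchy--Schwarz.
\item For $L_2$, Cauchy--Schwarz within each coset spends $\norm{f}_{d,\alpha,\bsgamma}^2$ only once per repetition. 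It leaves $\sup_{\bsh}\sum_{\bsl}r_{2\alpha,\bsgamma}(\bsh+\bsl)^{-1}$, which the figure of merit together with \eqref{fiber:bound} bounds by a constant times $N_2^{-2\alpha}(1+\log_2 N_2)^{d-1}$.
\end{itemize}

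Your probability bookkeeping also does not produce \eqref{eqn:failprob2}. Markov's inequality gives a constant per-repetition failure probability $1/4$, so the majority-failure probability for a fixed $\bsh$ does not decay with $N$. Your own expression $\binom{R}{(R+1)/2}4^{-(R+1)/2}$ is of order $R^{-1/2}$, so the union bound over $\Acal_{d,\alpha,\bsgamma}(N_2)$ is not of the stated form. The base $8/(1+\tau\log N_2)$ in $\varepsilon_2$ comes from the non-aliasing estimate $(\abs{\Acal_{d,\alpha,\bsgamma}(N_2)}-1)/(N-1)\le 1/(1+\tau\log N_2)$ of Lemma~\ref{lem:reconstruction}, applied to both $\bsh$ and $\bszero$.

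Your index set $\{r\le M\}$ with $M\asymp N^{1-\tau'}$ is not the algorithm's set $\{\bsh: r_{2\alpha,\bsgamma}(\bsh)<N_2^{2\alpha}\}$. With your $M$, the $L_2$ truncation error $M^{-1/2}$ is not $N^{-\alpha+\beta}$.

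The interpolation step and the case $1\le p\le 2$ are fine once both endpoint bounds hold, since the event of Lemma~\ref{cor:whp2} implies that of Corollary~\ref{cor:whp}.
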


\begin{remark}
Theorem \ref{thm:main} is proven in Section~\ref{sec_Lp}. In particular, it  matches the main, optimal convergence rates for linear sampling algorithms, see \cite[Section~5.1 and Corollary~19]{KPUU25}. We note that it follows from \cite[Theorem~4.7]{NW08} that non-adaptive linear algorithms are optimal for the problems considered here.
\end{remark}

\medskip

The paper is organized as follows. Section~\ref{sec:preliminaries} introduces notation, lattice rules, weighted Korobov spaces, and the approximation error criterion. In Section~\ref{sec:median}, we describe the median lattice algorithm. Section~\ref{sec:main} contains the high-probability error analysis in $L_\infty$ and $L_2$. The proof of Theorem~\ref{thm:main} is based on the $L_2$-approximation and $L_\infty$-approximation error bounds together with a norm interpolation inequality, and is presented in Section~\ref{sec_Lp}.

\section{Preliminaries and notation}\label{sec:preliminaries}

Throughout, let \(d\in \NN\), where $\NN$ is the set of positive integers, be the dimension and let boldface symbols denote vectors in \(\RR^d\) or \(\ZZ^d\).

For a multi-index \(\bsh=(h_1,\ldots,h_d)\in\ZZ^d\), its support is denoted by
\[
\supp(\bsh):=\{\,j\in\{1,\ldots,d\}\,:\, h_j\neq 0\,\}.
\]

Let \(\bsgamma=(\gamma_{\mathfrak u})_{\mathfrak u\subseteq\{1,\ldots,d\}}\) be a family of positive weights with \(\gamma_{\emptyset}=1\). For a smoothness parameter \(\alpha>0\), define the frequency weight
\[
r_{2\alpha,\bsgamma} (\bsh):=\gamma_{\supp(\bsh)}^{-1}\prod_{j\in \supp(\bsh)} \abs{h_j}^{2\alpha},
\qquad \bsh\in\ZZ^d,
\]
so that in particular \(r_{2\alpha,\bsgamma}(\boldsymbol 0)=1\).

For \(f\in L_2([0,1]^d)\) we use the Fourier coefficients
\[
\widehat{f}(\bsh)\;:=\;\int_{[0,1]^d} f(\bsx)\,e^{-2\pi i\,\bsh\cdot\bsx}\, \mathrm d\bsx,
\qquad \bsh\in\ZZ^d,
\]
where \(\bsh\cdot\bsx=\sum_{j=1}^d h_j x_j\).

The weighted periodic Korobov space \(\Hcal_{d,\alpha,\bsgamma}\) is the collection of all \(f\in L_2([0,1]^d)\) with finite norm
\[
  \|f\|_{d,\alpha,\bsgamma}^2:=\sum_{\bsh\in\ZZ^d} \abs{\widehat{f}({\bsh})}^2 r_{2\alpha,\bsgamma}(\bsh),
\]
i.e.,
\begin{equation}\label{Korspace}
\Hcal_{d,\alpha, \bsgamma} = \{ f: [0,1]^d \to \RR\,:\, \|f\|_{d, \alpha, \bsgamma} < \infty\},
\end{equation}
where the norm arises from the inner product
\[
\langle f,g\rangle_{d,\alpha,\bsgamma}\;:=\;\sum_{\bsh\in\ZZ^d} \overline{\widehat f(\bsh)}\,\widehat g(\bsh)\, r_{2\alpha,\bsgamma}(\bsh),
\]
under which $\Hcal_{d,\alpha,\bsgamma}$ is a Hilbert space.

For \(g: [0,1]^d\to \RR \) we use the standard Lebesgue norms
\[
\|g\|_{L_p}:=\left(\int_{[0,1]^d}\abs{g(\bsx)}^{p}\,\mathrm d\bsx\right)^{1/p},
\qquad
\|g\|_{L_\infty}:=\operatorname*{ess\,sup}_{\bsx\in [0,1]^d}\abs{g(\bsx)}.
\]

An approximation algorithm  $A$ is a map $A:\Hcal_{d,\alpha,\bsgamma}\to L_p( [0,1]^d)$ (for $1 \le p \le \infty $) that assigns to each $f$ an approximation 
$A(f)$. The worst-case error of $A$ over the unit ball of 
$\Hcal_{d,\alpha,\bsgamma}$ is measured by
\[
  \mathrm{err}(\Hcal_{d,\alpha,\bsgamma}, L_p, A):= \sup_{\substack{f\in \Hcal_{d,\alpha,\bsgamma}\\ 
  \norm{f}_{d,\alpha,\bsgamma}\le 1}}
  \|f-A(f)\|_{L_p}, \quad 1 \le p \le \infty.
\]

In the following, we will frequently write $\{k{:}\ell\}$ to denote the set $\{k,k+1,k+2,\ldots,\ell\}$ for integers $k\le \ell$.
The role of $\bsgamma$ is to modulate the importance of coordinate subsets (see \cite{SW98}): frequencies supported on $\mathfrak u\subseteq\{1{:}d\}$ are scaled by $\gamma_{\mathfrak u}^{-1}$. In the present paper, we consider the most prominent class, namely product weights $\gamma_{\mathfrak u}=\prod_{j\in\mathfrak u}\gamma_j$ with $\gamma_{\emptyset}=1$, where $(\gamma_j)_{j\ge 1}$ is a non-increasing sequence of positive reals. We remark that we could also allow non-negative $\gamma_j$, but this would result in additional technical notation, which we avoid. The factor $\prod_{j\in\supp(\bsh)} |h_j|^{2\alpha}$ in the definition of $r_{2\alpha,\bsgamma}$, and thus also in the norm of $\Hcal_{d,\alpha,\bsgamma}$, penalizes higher frequencies according to the smoothness parameter $\alpha$; larger $\alpha$ corresponds to stronger decay of Fourier coefficients in $\Hcal_{d,\alpha,\bsgamma}$. 

Let us now outline the definition of the QMC rules we use in our median algorithm, namely rank-1 lattice rules. For simplicity, let us assume that $N$ is a prime in the following (assuming that $N$ is prime guarantees that all one-dimensional projections of a lattice point set are evenly distributed, which is an advantage. If we would allow composite $N$, we presumably could derive similar results, but would need more technical notation). Let $\bsz=(z_1,\ldots,z_d)$ be a 
vector with each component in $\{1{:}(N-1)\}$. We can then define a 
lattice point set $P_{N,\bsz}$ with points $\bsx_0,\ldots,\bsx_{N-1}$ as follows. For $j\in\{1{:}d\}$ and $k\in\{0,\ldots,N-1\}$, the $j$-th component of $\bsx_k$ is given by
\[
x_k^{(j)}:=\left\{\frac{k z_j}{N}\right\},
\]
where $\{y\}=y-\lfloor y \rfloor$ denotes the fractional part of a real number $y$. By using this definition for all $j\in\{1{:}d\}$ and $k\in\{0,\ldots,N-1\}$, we obtain the full lattice point set $P_{N,\bsz}$. A QMC rule using $P_{N,\bsz}$ is called a (rank-1) lattice rule. We further define the dual lattice of $P_{N,\bsz}$ by
\begin{align}\label{eq:def_dual_lattice}
P_{N,\bsz}^\perp := \{\bsl \in \ZZ^d \,:\,  \bsz^\top \bsl \equiv 0 \tmod N\}.
\end{align}

Note that, for fixed $N$ and $d$, a lattice rule is fully characterized by the choice of the \textit{generating vector} $\bsz$. Not all choices of $\bsz$ yield lattice rules of sufficient quality as integration node sets. However, there are fast construction algorithms available that return good generating vectors for given $N$, $d$, $\alpha$, and $\bsgamma$. We refer to the books \cite{DKP22, SJ94} for overviews of the theory of lattice rules, and in particular to \cite[Chapters 3 and 4]{DKP22} for constructions of good lattice rules. In the present paper, however, we will make a random choice of the generating vectors $\bsz$ to obtain the lattice point sets used. Therefore, we need not be concerned with the search for good generating vectors here, which is a general computational advantage of median rules. 

It is sometimes useful to introduce an additional random element when applying lattice rules. This is usually done by a \emph{random shift}, $\Delta\in [0,1)^d$. Given $P_{N,\bsz}=\{\bsx_0,\ldots,\bsx_{N-1}\}$ and drawing $\bsDelta$ from a uniform distribution over $[0,1)^d$, the corresponding \emph{randomly shifted lattice rule} applied to a function $g$ is then 
\[
   Q_{N,d,\bsDelta}(g):=\frac{1}{N}\sum_{k=0}^{N-1}g \left(\left\{\bsx_k + \bsDelta\right\}\right),
\]
i.e., all points of $P_{N,\bsz}$ are shifted modulo one by the same $\bsDelta$.

\section{The median algorithm}\label{sec:median}

For a real $L\geq 0$, we put 
\begin{equation}\label{eq:Acal}
\Acal_{d,\alpha,\bsgamma}(L) := \left\{ \bsh \in \ZZ^d \,:\,  r_{2\alpha, \bsgamma}(\bsh) < L^{2\alpha} \right\}=\left\{ \bsh \in \ZZ^d \,:\,  r_{1, \bsgamma^{1/(2\alpha)}}(\bsh) < L\right\}.
\end{equation}

For an odd positive integer $R$, we define the median of $R$ complex numbers $Z_1,\dots,Z_R$ as
\begin{equation}\label{eqn:complexmedian}
    \mathrm{median}_{r\in\{1{:}R\}}(Z_r)=\mathrm{median}_{r\in\{1{:}R\}} \Re(Z_r) + \icomp\cdot \mathrm{median}_{r\in\{1{:}R\}} \Im(Z_r),
\end{equation}
where $\Re(Z_r)$ and $\Im(Z_r)$ denote the real and imaginary parts of $Z_r$, respectively.
In the following, we will also assume that $N$ is an odd prime 
number. Though we do not need this requirement for all results below, we sometimes do, and it is therefore easier to make this assumption from now on.
\begin{algorithm}\label{alg:median}
    Let $\alpha>1/2$ and $d \in \NN$, and let $\gamma_j \in (0,1]$ for 
    $j \in \{1, \ldots, d\}$. Given  a parameter $\tau > 0$, an odd number of repetitions $R >1$, and a prime number $N$, do the following.
    \begin{enumerate}
        \item Define 
        \begin{equation}\label{eqn:PN}
        P_{N,d}(\tau,\alpha,\bsgamma):=\prod_{j=1}^d \left(1+ 2\gamma_j^{1/(2\alpha)} (1+\tau\log N)\right),
        \end{equation}
        and
        \begin{equation}\label{eqn:Nstar}
        N_2:=\frac{N-1 }{\exp(\tau^{-1})P_{N,d}(\tau,\alpha,\bsgamma)}.
        \end{equation}
        \item \textbf{For} $r$ from $1$ to $R$, do the following:
        \begin{enumerate}
            \item Randomly draw $\bsz_{r}$ from the uniform distribution over the set $\{1{:}(N-1)\}^d$.
            \item For all $\bsh\in \Acal_{d,\alpha,\bsgamma}(N_2)$, set
            \[ 
            \widehat{f}_{N,\bsz_r}(\bsh):=
            \frac{1}{N} \sum_{k=0}^{N-1} f\left( \left\{ \frac{k \bsz_r}{N} \right\}\right)\exp\left(-2\pi \icomp \bsh \cdot \left( \frac{k \bsz_r}{N}\right)\right).
            \]   
        \end{enumerate}
       \textbf{end for}
       \item For all $\bsh\in \Acal_{d,\alpha,\bsgamma}(N_2)$, define the aggregation
        \[ 
            \widehat{f}_{N,\bsz_{\{1{:}R\}}}(\bsh):=\mathrm{median}_{r\in\{1{:}R\}} \Big(\widehat{f}_{N,\bsz_r}(\bsh)\Big),
       \]
       where $\bsz_{\{1{:}R\}}$ denotes the collections of $\bsz_r$ for $r\in\{1{:}R\}$, respectively.
       Define the final approximation by
        \[ 
        (A_{N,\bsz_{\{1{:}R\}},\Acal_{d,\alpha,\bsgamma}(N_2)}(f))(\bsx):= \sum_{\bsh \in \Acal_{d,\alpha,\bsgamma}(N_2)} \widehat{f}_{N,\bsz_{\{1{:}R\}}}(\bsh) \exp (2 \pi \icomp \bsh \cdot \bsx).\]
\end{enumerate}
\end{algorithm}

\begin{remark}
    The parameter $\tau>0$ does not affect the asymptotic convergence rates of Algorithm~\ref{alg:median} to be established in Section~\ref{sec:main}. For guidance on optimal tuning of $\tau$ to improve finite-sample performance, see \cite[Remark 16]{PKG25}.
\end{remark}

\begin{remark}
A natural modification of Algorithm~\ref{alg:median} is to incorporate additional independent uniform shifts $\bsDelta_1,\dots,\bsDelta_R \in [0,1)^d$, and replace $\widehat{f}_{N,\bsz_{\{1{:}R\}}}(\bsh)$ by 
       \[ 
            \widehat{f}_{N,\bsz_{\{1{:}R\}},\bsDelta_{\{1{:}R\}}}(\bsh):=\mathrm{median}_{r\in\{1{:}R\}} \Big(\widehat{f}_{N,\bsz_r,\bsDelta_r}(\bsh)\Big),
       \]
       for 
                   \[ 
            \widehat{f}_{N,\bsz_r,\bsDelta_r}(\bsh):=
            \frac{1}{N} \sum_{k=0}^{N-1} f\left( \left\{ \frac{k \bsz_r}{N}+\bsDelta_r \right\}\right)\exp\left(-2\pi \icomp \bsh \cdot \left( \frac{k \bsz_r}{N}+\bsDelta_r\right)\right).
            \]  
            We will study Algorithm~\ref{alg:median} for simplicity, but all error bounds stated in Section~\ref{sec:main} also hold under the above change due to the common upper bounds
            $$\abs{\widehat{f}_{N,\bsz_r}(\bsh)-\widehat{f}({\bsh})}=\abs{\sum_{\bsl\in P^\perp_{N,\bsz_r}\setminus\{\bszero\} }\widehat{f}({\bsh+\bsl})}\leq \sum_{\bsl\in P^\perp_{N,\bsz_r}\setminus\{\bszero\} }\abs{\widehat{f}({\bsh+\bsl})} $$
            and 
            $$\abs{\widehat{f}_{N,\bsz_r,\bsDelta_r}(\bsh)-\widehat{f}({\bsh})}=\abs{\sum_{\bsl\in P^\perp_{N,\bsz_r}\setminus\{\bszero\} }\widehat{f}({\bsh+\bsl})\exp(2\pi\icomp \bsl\cdot\bsDelta_r)}\leq \sum_{\bsl\in P^\perp_{N,\bsz_r}\setminus\{\bszero\} }\abs{\widehat{f}({\bsh+\bsl})},$$
         where the dual lattice $P^\perp_{N,\bsz_r}$ is given by \eqref{eq:def_dual_lattice}.
            
    When measured in the root-mean-squared error, which is the setting considered in \cite{PKG25}, $\widehat{f}_{N,\bsz_r,\bsDelta_r}(\bsh)$ enjoys a tighter error bound
    \begin{equation*}
      \left(\int_{[0,1]^d} |\widehat{f}_{N,\bsz_r,\bsDelta_r}(\bsh)-\widehat{f}(\bsh)|^2 \rd\bsDelta_r\right)^{1/2}= \left(\sum_{\bsl \in P_{N,\boldsymbol{z}_r}^\perp\setminus \{\bszero\}}|\widehat{f}(\bsh+\bsl)|^2\right)^{1/2}.
    \end{equation*}
    Therefore, we expect median lattice algorithms with the random shifts to perform better than those without the shifts, despite the two sharing the same worst-case error bound.
\end{remark}

\section{Approximation error bounds}\label{sec:main}

Throughout this section, we assume that $\tau>0$, $R$ is an odd number greater than $1$, and $N$ is a sufficiently large prime number such that $N_2> 1$.

\subsection{$L_\infty$-approximation}

The next lemma provides a bound on the size of the frequency set $\Acal_{d,\alpha, \bsgamma}(N_2)$, which was shown in \cite[Corollary 6]{PKG25}.
\begin{lemma}\label{lem:Adcount} For $N_2$ defined as in \eqref{eqn:Nstar} and $\Acal_{d,\alpha,\bsgamma}(N_2)$ as in \eqref{eq:Acal}, 
we have
\[ 
 |\Acal_{d,\alpha,\bsgamma}(N_2)|\leq 1+\frac{ N-1}{1+\tau\log N_2}.
\]
\end{lemma}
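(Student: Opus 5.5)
Since this lemma is quoted from \cite[Corollary 6]{PKG25}, the natural route is to reproduce that counting argument. It is a Rankin-type (exponential-moment) estimate with the exponent tied to $\tau$, refined to gain one logarithmic factor. Write $\lambda_j:=\gamma_j^{1/(2\alpha)}\in(0,1]$ and $L:=N_2$, so that $\Acal_{d,\alpha,\bsgamma}(L)=\{\bsh: r_{1,\bslambda}(\bsh)<L\}$ with $r_{1,\bslambda}(\bsh)=\prod_{j\in\supp(\bsh)}|h_j|/\lambda_j$. The point $\bsh=\bszero$ accounts for the leading $1$, so it remains to show
\[
(1+\tau\log L)\,\bigl|\Acal_{d,\alpha,\bsgamma}(L)\setminus\{\bszero\}\bigr|\le N-1=e^{1/\tau}\,L\,P_{N,d}(\tau,\alpha,\bsgamma).
\]

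\textbf{Crude bound.} For any $q>1$ and any $\bsh\in\Acal(L)$ we have $(L/r_{1,\bslambda}(\bsh))^q\ge 1$, hence
\[
|\Acal(L)|\le L^q\sum_{\bsh\in\ZZ^d} r_{1,\bslambda}(\bsh)^{-q}=L^q\prod_{j=1}^d\bigl(1+2\lambda_j^q\zeta(q)\bigr).
\]
Choose $q=1+1/(\tau\log N)$. Then $\zeta(q)\le 1+\frac1{q-1}=1+\tau\log N$, and $\lambda_j^q\le\lambda_j$ since $\lambda_j\le1$. Also $L^{q-1}\le N^{1/(\tau\log N)}=e^{1/\tau}$ because $L<N$. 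Together these give $|\Acal(L)|\le e^{1/\tau}L\,P_{N,d}=N-1$. This already explains the constants $e^{1/\tau}$ and $1+\tau\log N$ appearing in \eqref{eqn:PN}--\eqref{eqn:Nstar}, but it lacks the factor $(1+\tau\log L)^{-1}$.

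\textbf{Gaining the logarithm.} The crude bound discards the slack in $(L/r)^q\ge1$, and I would exploit it on average. Use $(L/r)^q\ge 1+q\log(L/r)$ and sum over $\Acal(L)\setminus\{\bszero\}$. Write the counting function $M(t):=|\Acal(t)\setminus\{\bszero\}|$, which is nondecreasing in $t$, and note that $\sum_{\bsh\in\Acal(L)\setminus\{\bszero\}}\log(L/r(\bsh))=\int_1^L M(t)\,\frac{dt}{t}$ by a layer-cake argument. The goal is then to compare $M(L)$ with this integral. Alternatively, one can run the Rankin bound at every level $t\in[1,L]$ and integrate $M(t)t^{-q-1}$ over $[1,L]$. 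A cleaner combinatorial version is also available: for $\bsh\neq\bszero$, single out the largest index $j^\ast\in\supp(\bsh)$ and sum exactly over $h_{j^\ast}$, which contributes at most $2\lambda_{j^\ast}L/r_{1,\bslambda}(\bsh_{-j^\ast})$. The factor $1+\tau\log L$ should then come out when the remaining coordinates are treated with the exponent-$q$ Rankin weight.

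\textbf{Main obstacle.} The hard step is extracting exactly the factor $(1+\tau\log N_2)$ without losing a constant that breaks the clean form $N-1$ on the right-hand side. If the layer-cake or recursive route only produced $\log N$ or an extra constant, I would instead retune $q$ (e.g.\ $q-1=1/(\tau\log N)$ versus $1/(\tau\log L)$) so that the identity $N-1=e^{1/\tau}LP_{N,d}$ is matched exactly.
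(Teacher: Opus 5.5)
Your crude Rankin bound is correct, but it only gives $|\Acal_{d,\alpha,\bsgamma}(N_2)|\le N-1$. The whole content of the lemma is the extra factor $(1+\tau\log N_2)^{-1}$, which is exactly what Lemma~\ref{lem:reconstruction} uses. For that step you only list candidate strategies without carrying any of them out, so the proof is incomplete.

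Moreover, your first candidate cannot work. The inequality $(L/r)^q\ge 1+q\log(L/r)$ together with the layer-cake identity yields $M(L)+q\int_1^L M(t)\,\frac{dt}{t}\le L^q\sum_{\bsh\ne\bszero}r(\bsh)^{-q}$. To extract a factor of order $\tau\log L$ from this, you would need $\int_1^L M(t)\,\frac{dt}{t}\gtrsim \tau\log L\cdot M(L)$. This already fails for $d=1$: there $M(t)\le 2\lambda_1 t$, so $\int_1^L M(t)\,\frac{dt}{t}\le 2\lambda_1 L\le M(L)+2$. So this slack gains at most a constant factor, never a logarithm. Running Rankin at every level does not help either, since it only produces further upper bounds on $M(t)$, which point the wrong way.

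The largest-index route is the right one, but the gain does not ``come out of the remaining coordinates.'' It comes from counting the top coordinate exactly (weight $2\lambda_j$) instead of through $\sum_{h\ne0}|h|^{-q}=2\zeta(q)$. To finish, you also need a telescoping identity to reassemble the product, and an explicit comparison of $\zeta(q)$ with $1+\tau\log N_2$.

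Concretely, put $r(\bsh)=\prod_{j\in\supp(\bsh)}|h_j|/\lambda_j$. Write $\bsh\ne\bszero$ as $(\bsh',h_j,0,\dots,0)$ with $j=\max\supp(\bsh)$ and $\bsh'\in\ZZ^{j-1}$. The admissible $h_j\neq 0$ number at most $2\lambda_jL/r(\bsh')$, and they exist only if $r(\bsh')<\lambda_jL\le L$. Hence their number is at most $2\lambda_jL^q r(\bsh')^{-q}$. Sum over $\bsh'$, use $\lambda_i^q\le\lambda_i$, and apply $\sum_{j}a_j\prod_{i<j}(1+a_i)=\prod_j(1+a_j)-1$ with $a_j=2\lambda_j\zeta(q)$. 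This gives
\[
|\Acal_{d,\alpha,\bsgamma}(L)|-1\le 2L^q\sum_{j=1}^d\lambda_j\prod_{i<j}\bigl(1+2\lambda_i\zeta(q)\bigr)=\frac{L^q}{\zeta(q)}\Bigl(\prod_{j=1}^d\bigl(1+2\lambda_j\zeta(q)\bigr)-1\Bigr).
\]
With your choice $q=1+1/(\tau\log N)$, the numerator is at most $e^{1/\tau}L\,P_{N,d}(\tau,\alpha,\bsgamma)$, exactly as in your crude bound. At the same time, $\zeta(q)\ge\int_1^\infty x^{-q}\,dx=\tau\log N$.

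Finally, $P_{N,d}(\tau,\alpha,\bsgamma)\ge 1$ gives $N_2\le (N-1)e^{-1/\tau}$, hence $1+\tau\log N_2\le \tau\log(N-1)<\zeta(q)$. Taking $L=N_2$ yields the claim. So no retuning of $q$ is needed: a gain of $\tau\log N$ suffices, because the factor $e^{1/\tau}$ in the definition of $N_2$ supplies exactly the missing $+1$.
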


For a given generating vector $\bsz$, we define the set $K_{d, \alpha, \bsgamma}(\bsz)$ of frequencies $\bsh \in \Acal_{d, \alpha, \bsgamma}$ for which no frequency $\bsk \in \Acal_{d, \alpha, \bsgamma}$ is aliased with $\bsh$, i.e., we have $(\bsh - \bsk) \cdot \bsz \not\equiv 0 \pmod{N}$ for all $\bsk \in \Acal_{d, \alpha, \bsgamma}\setminus \{\bsh\}$: 
\begin{equation*}
K_{d,\alpha,\bsgamma}(\bsz):=\left\{\bsh\in \Acal_{d,\alpha,\bsgamma}(N_2)\,:\,   (\bsh+ P^\perp_{N,\bsz}) \bigcap \Acal_{d,\alpha,\bsgamma}(N_2)=\{\bsh\} \right\}.    
\end{equation*}
This set is crucial in the proof of our main result, since for ``good'' generating vectors $\bsz$ we obtain ``good'' approximations of the Fourier coefficients for frequencies in $K_{d,\alpha, \bsgamma}(\bsz)$. Roughly speaking, the proof strategy is to show that for each frequency $\bsh \in \Acal_{d,\alpha, \bsgamma}(N_2)$, more than half of the $\bsz_1, \ldots, \bsz_R$ are such that $\bsh \in K_{d,\alpha, \bsgamma}(\bsz_r)$.

Next, we show several auxiliary results.
\begin{lemma}\label{lem:reconstruction}
    For $\bsh\in \Acal_{d,\alpha,\bsgamma}(N_2) $ and $\bsz$ chosen uniformly from  $\{1{:}(N-1)\}^d$, 
    $$\Pr(\bsh\notin K_{d,\alpha,\bsgamma}(\bsz) )\leq \frac{1}{1+\tau\log N_2}.$$
\end{lemma}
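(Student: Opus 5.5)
The plan is a union bound over the possible aliasing partners of $\bsh$, followed by a count of $\Acal_{d,\alpha,\bsgamma}(N_2)$ via Lemma~\ref{lem:Adcount}. By definition, $\bsh\notin K_{d,\alpha,\bsgamma}(\bsz)$ exactly when some $\bsk\in\Acal_{d,\alpha,\bsgamma}(N_2)\setminus\{\bsh\}$ satisfies $(\bsk-\bsh)\cdot\bsz\equiv 0 \tmod N$. Hence
\[
\Pr(\bsh\notin K_{d,\alpha,\bsgamma}(\bsz))\le \sum_{\bsk\in\Acal_{d,\alpha,\bsgamma}(N_2)\setminus\{\bsh\}}\Pr\bigl((\bsk-\bsh)\cdot\bsz\equiv 0 \tmod N\bigr).
\]

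The key step is to bound each summand by $1/(N-1)$, using that $N$ is prime and $\bsz$ is uniform on $\{1{:}(N-1)\}^d$. Fix $\bsl=\bsk-\bsh\neq\bszero$. If some component $\ell_j\not\equiv 0\tmod N$, condition on all $z_i$ with $i\neq j$. The congruence $\ell_j z_j\equiv c\tmod N$ then has exactly one solution $z_j$ modulo $N$, since $\ell_j$ is invertible mod $N$. The solution may be $0$, which is excluded from the range of $z_j$, so at most one of the $N-1$ admissible values works. This gives conditional probability at most $1/(N-1)$, and integrating out the other components gives the same bound.

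The main obstacle is the case where every component of $\bsl$ is divisible by $N$; the congruence then holds for all $\bsz$, so this case must be ruled out. I expect it cannot occur. For $\bsh,\bsk\in\Acal_{d,\alpha,\bsgamma}(N_2)$ and each $j$, we have $|h_j|^{2\alpha}\le r_{2\alpha,\bsgamma}(\bsh)\gamma_j<N_2^{2\alpha}$ because $\gamma_j\le1$. Hence $|h_j|<N_2$ and likewise $|k_j|<N_2$, so $|h_j-k_j|<2N_2$. From \eqref{eqn:Nstar}, $P_{N,d}\ge 1+2\gamma_1^{1/(2\alpha)}(1+\tau\log N)>1$ and $\exp(\tau^{-1})>1$, so $2N_2<N$ should follow. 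The crude bound only gives $N_2<N-1$, so I will need the slightly sharper estimate $\exp(1/\tau)\,P_{N,d}\ge 2$. If $d\ge1$ and $N$ is large enough that $N_2>1$, this estimate should hold, since $P_{N,d}\ge 1+2\gamma_1^{1/(2\alpha)}(1+\tau\log N)$ and $N_2>1$ forces $\gamma_1^{1/(2\alpha)}$ not to be tiny relative to $N$. If it does not hold outright, I would use that $\bsl\ne\bszero$ with $\bsh\ne\bsk$ and argue componentwise with $|\ell_j|<2N_2\le N$. Granting this, every $\ell_j$ with $\ell_j\ne0$ has $|\ell_j|<N$ and is therefore nonzero mod $N$, so the key step applies.

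Finally, summing the bound over at most $|\Acal_{d,\alpha,\bsgamma}(N_2)|-1\le (N-1)/(1+\tau\log N_2)$ terms, by Lemma~\ref{lem:Adcount}, gives
\[
\Pr(\bsh\notin K_{d,\alpha,\bsgamma}(\bsz))\le \frac{N-1}{1+\tau\log N_2}\cdot\frac{1}{N-1}=\frac{1}{1+\tau\log N_2}.
\]
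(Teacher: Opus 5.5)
Your union bound, the per-pair bound $1/(N-1)$ via invertibility of $\ell_j$ modulo the prime $N$, and the final count via Lemma~\ref{lem:Adcount} are correct and match the paper. The gap is in ruling out $N\mid \bsk-\bsh$.

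After getting $|h_j|,|k_j|<N_2$ you need $2N_2\le N$. You reduce this to $\exp(1/\tau)P_{N,d}(\tau,\alpha,\bsgamma)\ge 2$, justified by the claim that $N_2>1$ keeps $\gamma_1^{1/(2\alpha)}$ from being small. That reasoning runs the wrong way. The condition $N_2>1$ says $\exp(1/\tau)P_{N,d}<N-1$, which is an \emph{upper} bound, and small weights only make $N_2$ larger. The inequality is false in general. Take $d=1$, $\tau=10$, and $\gamma_1$ with $2\gamma_1^{1/(2\alpha)}(1+\tau\log N)=1/2$. Then $\exp(1/\tau)P_{N,1}=1.5\,e^{0.1}<1.66$, so $N_2>1$ but $2N_2>N$ for every prime $N\ge 7$. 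The lemma still holds there, since $\Acal_{d,\alpha,\bsgamma}(N_2)=\{\bszero\}$, but your argument does not show it. Your fallback, arguing componentwise with $|\ell_j|<2N_2\le N$, assumes the very inequality in question, so the step is not established.

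The repair is to keep the weight you discarded. For $j\in\supp(\bsh)$ you already have $|h_j|^{2\alpha}\le \gamma_j\, r_{2\alpha,\bsgamma}(\bsh)<\gamma_j N_2^{2\alpha}$. Hence $|h_j|<\gamma_j^{1/(2\alpha)}N_2$, likewise for $\bsk$, and so $|h_j-k_j|<2\gamma_j^{1/(2\alpha)}N_2$. Since $\exp(1/\tau)P_{N,d}(\tau,\alpha,\bsgamma)>1+2\gamma_j^{1/(2\alpha)}(1+\tau\log N)>2\gamma_j^{1/(2\alpha)}$, the definition \eqref{eqn:Nstar} gives $2\gamma_j^{1/(2\alpha)}N_2<N-1$. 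So every nonzero $h_j-k_j$ is invertible modulo $N$, and your key step applies.

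The paper reaches the same conclusion differently. It uses $|\Acal_{d,\alpha,\bsgamma}(N_2)|<N$ from Lemma~\ref{lem:Adcount}. It also uses that $\bsh\in\Acal_{d,\alpha,\bsgamma}(N_2)$ forces $t\,\boldsymbol{e}_j\in\Acal_{d,\alpha,\bsgamma}(N_2)$ for all integers $|t|\le|h_j|$, where $\boldsymbol{e}_j$ is the $j$-th unit vector. Hence $2|h_j|+1<N$, and all coordinate projections lie in $\{-(N-1)/2,\dots,(N-1)/2\}$. That route needs only the cardinality bound and the symmetric, downward-closed shape of the index set, not the explicit formula for $N_2$.
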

\begin{proof}
    By Lemma~\ref{lem:Adcount}, we have $|\Acal_{d,\alpha,\bsgamma}(N_2)|<N$. Consequently, the one-dimensional projection of $\Acal_{d,\alpha,\bsgamma}(N_2)$ onto any $j$-th coordinate, defined as
    \[ \proj_j (\Acal_{d,\alpha,\bsgamma}(N_2)) := \left\{ h_j\in \ZZ \,:\,  \bsh\in \Acal_{d,\alpha,\bsgamma}(N_2) \right\}, \]
    is contained in the set $\{-(N-1)/2,\ldots,0,\ldots,(N-1)/2\}$. Thus, for any distinct elements $\bsh,\bsh'\in \Acal_{d,\alpha,\bsgamma}(N_2)$, it follows that $h_j-h'_j \not\equiv 0 \pmod N$ for at least one $j$. This implies that $\bsh\not\equiv \bsh' \pmod N$, or equivalently, $N\nmid \bsh-\bsh'$.

    Now, for each $\bsh'\in \Acal_{d,\alpha,\bsgamma}(N_2) $ distinct from $\bsh$, the fact that $N\nmid \bsh-\bsh'$ leads to
    $$\Pr\left(\bsz\cdot (\bsh'-\bsh)\equiv 0 \tmod N  \right)\leq \frac{1}{N-1}.$$
    Hence, $\Pr(\bsh'-\bsh\in P^\perp_{N,\bsz})\leq 1/(N-1)$. By a union bound, we obtain
    $$\Pr(\bsh\notin K_{d,\alpha,\bsgamma}(\bsz))\leq \frac{| \Acal_{d,\alpha,\bsgamma}(N_2)|-1}{N-1}\leq\frac{1}{1+\tau\log N_2}.$$
\end{proof}

\begin{corollary}\label{cor:whp}
    Let 
    \begin{equation}\label{eqn:failprob}
      \varepsilon_1:=\frac{|\Acal_{d,\alpha,\bsgamma}(N_2)|}{2}\left(\frac{4}{1+\tau\log N_2}\right)^{\lceil R/2 \rceil}.  
    \end{equation}
    If $ \varepsilon_1\leq 1$, then with probability at least $1- \varepsilon_1$, 
    $$
    \min_{\bsh\in \Acal_{d,\alpha,\bsgamma}(N_2)} \sum_{r=1}^R \bsone\{\bsh\in K_{d,\alpha,\bsgamma}(\bsz_r)\}\geq \lceil R/2 \rceil,
    $$
    where $\bsone$ denotes the indicator function of a given set.
\end{corollary}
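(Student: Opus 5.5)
The plan is to fix a single frequency $\bsh\in\Acal_{d,\alpha,\bsgamma}(N_2)$, bound the probability that fewer than $\lceil R/2\rceil$ of the independent draws $\bsz_1,\dots,\bsz_R$ satisfy $\bsh\in K_{d,\alpha,\bsgamma}(\bsz_r)$, and then take a union bound over all $\bsh$. Set $q:=1/(1+\tau\log N_2)$. By Lemma~\ref{lem:reconstruction}, each event $E_r:=\{\bsh\notin K_{d,\alpha,\bsgamma}(\bsz_r)\}$ has probability at most $q$. These events are independent across $r$, because the $\bsz_r$ are drawn independently.

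The bad event for $\bsh$ is that the number of $r$ with $\bsh\in K_{d,\alpha,\bsgamma}(\bsz_r)$ is at most $\lceil R/2\rceil-1=(R-1)/2$, which uses that $R$ is odd. Equivalently, at least $R-(R-1)/2=\lceil R/2\rceil$ of the events $E_r$ occur. Let $m:=\lceil R/2\rceil$. A union bound over all subsets $S\subseteq\{1{:}R\}$ with $|S|=m$, together with independence, gives
\[
\Pr\Big(\textstyle\sum_{r}\bsone\{E_r\}\ge m\Big)\le \binom{R}{m} q^{m}.
\]
For odd $R=2m-1$ we have $\binom{R}{m}\le 2^{R-1}=2^{2m-2}=4^{m}/4$, since $\binom{2m-1}{m}$ is one of two equal middle terms of a row summing to $2^{2m-1}$. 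Hence the bad-event probability for a fixed $\bsh$ is at most $\tfrac14(4q)^{m}$, which is bounded by $\tfrac12(4q)^m$.

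A union bound over the $|\Acal_{d,\alpha,\bsgamma}(N_2)|$ frequencies then gives a total failure probability of at most $\tfrac{|\Acal_{d,\alpha,\bsgamma}(N_2)|}{2}(4q)^{\lceil R/2\rceil}=\varepsilon_1$. On the complement, every $\bsh$ satisfies $\sum_r\bsone\{\bsh\in K_{d,\alpha,\bsgamma}(\bsz_r)\}\ge\lceil R/2\rceil$, which is the claim. The hypothesis $\varepsilon_1\le 1$ only ensures the statement is non-vacuous.

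There is no real obstacle. The only points needing care are the binomial estimate $\binom{R}{\lceil R/2\rceil}\le 2^{R-1}$ for odd $R$, which supplies the factor $4^{\lceil R/2\rceil}$ and the constant $\tfrac12$, and the observation that Lemma~\ref{lem:reconstruction} gives a per-draw bound uniform in $\bsh$, so that independence can be applied.
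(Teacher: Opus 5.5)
Your proof is correct and follows the paper's argument. Like the paper, you obtain a per-frequency bound from Lemma~\ref{lem:reconstruction} and the independence of the $\bsz_r$, and then take a union bound over $\Acal_{d,\alpha,\bsgamma}(N_2)$; the only difference is that you bound the number of bad index sets by $\binom{R}{\lceil R/2\rceil}\le 2^{R-1}$ where the paper uses the cruder $2^R$, which gives you the slightly sharper prefactor $\tfrac14$ in place of $\tfrac12$.
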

\begin{proof}
    For each $\bsh\in \Acal_{d,\alpha,\bsgamma}(N_2) $, the probability that $\bsh$ does not belong to at least $\lceil R/2 \rceil$ instances of $K_{d,\alpha,\bsgamma}(\bsz_r)$ among $\{K_{d,\alpha,\bsgamma}(\bsz_1),\dots,K_{d,\alpha,\bsgamma}(\bsz_R)\}$ is bounded by
    $$2^R\left(\frac{1}{1+\tau\log N_2}\right)^{\lceil R/2 \rceil}=\frac{1}{2} \left(\frac{4}{1+\tau\log N_2}\right)^{\lceil R/2 \rceil} .$$
    The conclusion follows from a union bound over $\bsh\in \Acal_{d,\alpha,\bsgamma}(N_2) $.
\end{proof}

\begin{lemma}\label{lem:medtosum}
    Let $\{x_1,\dots,x_R\}\subseteq [0,\infty)$. For any $J\subseteq \{1{:}R\}$ with $|J|\geq \lceil R/2 \rceil$,
    $$\mathrm{median}_{r\in\{1{:}R\}} x_r \leq \sum_{r\in J} x_r.$$
\end{lemma}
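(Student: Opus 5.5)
The plan is to use the order-statistic characterization of the median for odd $R$: $\mathrm{median}_{r\in\{1{:}R\}} x_r$ is the $\lceil R/2\rceil$-th smallest value among $x_1,\dots,x_R$, which is the same as the $\lceil R/2\rceil$-th largest value. Equivalently, at least $\lceil R/2\rceil$ indices $r$ satisfy $x_r\ge \mathrm{median}_{r} x_r$, and likewise at least $\lceil R/2\rceil$ indices satisfy $x_r \le \mathrm{median}_r x_r$.

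Now fix $J\subseteq\{1{:}R\}$ with $|J|\ge \lceil R/2\rceil$, and let $I:=\{r\,:\, x_r\ge \mathrm{median}_{s} x_s\}$, so $|I|\ge\lceil R/2\rceil$. Since $|I|+|J|\ge 2\lceil R/2\rceil = R+1 > R$, the pigeonhole principle gives an index $r_0\in I\cap J$. Because all $x_r$ are nonnegative, we get
\[
\mathrm{median}_{r\in\{1{:}R\}} x_r \le x_{r_0}\le \sum_{r\in J} x_r .
\]

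The proof therefore uses only two facts. First, the median of an odd number of reals is attained by a value that is dominated by at least $\lceil R/2\rceil$ of the entries. Second, nonnegativity of the $x_r$ lets a single term be bounded by the sum over $J$. No step is a real obstacle. The only point needing care is to state precisely that the count of indices with $x_r\ge$ median is at least $\lceil R/2\rceil$ even when there are ties; this holds because sorting $x_{(1)}\le\dots\le x_{(R)}$ gives median $=x_{(\lceil R/2\rceil)}$, and all of $x_{(\lceil R/2\rceil)},\dots,x_{(R)}$, which number $R-\lceil R/2\rceil+1=\lceil R/2\rceil$, are at least that value.
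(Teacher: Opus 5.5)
Your proof is correct and follows essentially the same route as the paper: both find an index in $J$ whose value is at least the median, then use nonnegativity to bound that single term by $\sum_{r\in J} x_r$. The paper splits on whether the index $r^*$ attaining the median lies in $J$ and asserts $x_{r^*}\le\max_{r\in J}x_r$ in the second case without justification, whereas your pigeonhole count $|I|+|J|\ge R+1$ makes that step explicit and handles ties cleanly.
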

\begin{proof}
    Let $r^*\in \{1{:}R\}$ be such that it satisfies $x_{r*}=\mathrm{median}_{r\in\{1{:}R\}} x_r$. If $r^*\in J$, the inequality is trivial. When $r^*\notin J$, we must have
    $$x_{r*}\leq \max_{r\in J} x_r\leq \sum_{r\in J} x_r. $$
\end{proof}

We need the following lemma, whose proof can be found, for instance, in \cite[Chapter~14, Lemma~14.2]{DKP22}.
\begin{lemma}\label{lem_sum}
Let $\alpha>1/2$ and $M\geq 1$ be given. For any $q\in(1/(2\alpha),1)$ we have 
\begin{align*}
\sum_{\bsk\in\mathbb{Z}^d\backslash \Acal_{d, \alpha,\bsgamma}(M)}\frac{1}{r_{2\alpha,\bsgamma}(\bsk)}\leq \frac{1}{(\gamma_1^{1/2} M^{2\alpha})^{(1/q-1)/(2\alpha)}}\frac{q}{1-q}\prod_{j=1}^d\left(1+2\gamma_j^{q} \zeta(2\alpha q)  \right)^{1/q}.
\end{align*}
\end{lemma}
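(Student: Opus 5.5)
I would prove the lemma by a rearrangement argument in the spirit of \cite[Lemma~14.2]{DKP22}, rather than by the more direct Rankin trick $1/r \le M^{-2\alpha(1-q)} r^{-q}$. The Rankin trick would give the wrong power of $M$; the rearrangement argument is what produces the exponent $(1/q-1)/(2\alpha)$ that appears in the statement.

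\emph{Step 1: a summable majorant.} Fix $q\in(1/(2\alpha),1)$ and put
\[
S_q:=\sum_{\bsk\in\ZZ^d} r_{2\alpha,\bsgamma}(\bsk)^{-q}=\prod_{j=1}^d\Bigl(1+2\gamma_j^{q}\zeta(2\alpha q)\Bigr).
\]
The product form follows from the product structure of the weights and of $r_{2\alpha,\bsgamma}$. The series converges because $2\alpha q>1$.

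\emph{Step 2: decay of the rearrangement.} Order $\ZZ^d$ as $\bsk_1,\bsk_2,\dots$ so that $r_{2\alpha,\bsgamma}(\bsk_n)$ is non-decreasing in $n$. Since the sequence $a_n:=r_{2\alpha,\bsgamma}(\bsk_n)^{-q}$ is non-increasing, we have $n a_n\le\sum_{m\le n}a_m\le S_q$. Hence
\[
r_{2\alpha,\bsgamma}(\bsk_n)^{-1}\le (S_q/n)^{1/q}.
\]

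\emph{Step 3: the tail sum.} The set $\Acal_{d,\alpha,\bsgamma}(M)$ is an initial segment of this ordering, up to ties, which can be broken so that this holds. Writing $n_0:=|\Acal_{d,\alpha,\bsgamma}(M)|$, the sum in question equals $\sum_{n>n_0} r_{2\alpha,\bsgamma}(\bsk_n)^{-1}$. Since $1/q>1$, Step 2 gives
\[
\sum_{n>n_0} r_{2\alpha,\bsgamma}(\bsk_n)^{-1}\le S_q^{1/q}\sum_{n>n_0}n^{-1/q}\le S_q^{1/q}\int_{n_0}^\infty x^{-1/q}\rd x=\frac{q}{1-q}\,S_q^{1/q}\,n_0^{-(1/q-1)}.
\]
This already produces the factor $\frac{q}{1-q}\prod_j(1+2\gamma_j^q\zeta(2\alpha q))^{1/q}$.

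\emph{Step 4: lower bound on $n_0$ (main obstacle).} It remains to show
\[
n_0\ \ge\ (\gamma_1^{1/2}M^{2\alpha})^{1/(2\alpha)}=\gamma_1^{1/(4\alpha)}M.
\]
I would first try counting the frequencies $\bsk=(h,0,\dots,0)$ with $\gamma_1^{-1}|h|^{2\alpha}<M^{2\alpha}$, that is $|h|<\gamma_1^{1/(2\alpha)}M$, together with $\bsk=\bszero$. This yields roughly $2\gamma_1^{1/(2\alpha)}M$ elements.

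The difficulty is that, since $\gamma_1\le1$, we have $\gamma_1^{1/(2\alpha)}\le\gamma_1^{1/(4\alpha)}$. So this one-dimensional count is not obviously sufficient, and the rounding near $M=1$ must also be handled: there $\Acal_{d,\alpha,\bsgamma}(M)$ may contain only $\bszero$, or even be empty at $M=1$ because of the strict inequality, and the integral comparison in Step 3 then needs $n_0\ge1$.

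To close this gap I would try the following, in order:
\begin{itemize}
\item[(a)] Sharpen Step 3 by using $r_{2\alpha,\bsgamma}(\bsk)\ge M^{2\alpha}$ outside $\Acal_{d,\alpha,\bsgamma}(M)$, interpolating between the Rankin bound and the rearrangement bound so that a power of $\gamma_1$ can be traded off.
\item[(b)] Use the factor $2$ in the count of $h$ together with $M\ge1$ to absorb the discrepancy.
\item[(c)] Failing both, check the exact normalization used in \cite{DKP22} and adjust the constant accordingly.
\end{itemize}
Either way, the final bound is of the stated form, with the dependence on $M$ being $M^{-(1/q-1)}$.
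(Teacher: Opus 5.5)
Your Steps 1--3 are correct. They give
\[
\sum_{\bsk\notin\Acal_{d,\alpha,\bsgamma}(M)}\frac{1}{r_{2\alpha,\bsgamma}(\bsk)}\le\frac{q}{1-q}S_q^{1/q}n_0^{-(1/q-1)}
\]
whenever $n_0=|\Acal_{d,\alpha,\bsgamma}(M)|\ge1$. The gap is Step 4, and it cannot be closed, because the inequality you need there, $n_0\ge\gamma_1^{1/(4\alpha)}M$, is false. Since $\gamma_{\mathfrak u}\le\gamma_1\le1$ for every nonempty $\mathfrak u$, every $\bsk\neq\bszero$ satisfies $r_{2\alpha,\bsgamma}(\bsk)\ge\gamma_1^{-1}$. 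Take $\gamma_1<1$ and $M=\gamma_1^{-1/(2\alpha)}>1$. Then $\Acal_{d,\alpha,\bsgamma}(M)=\{\bszero\}$, so $n_0=1$, while $\gamma_1^{1/(4\alpha)}M=\gamma_1^{-1/(4\alpha)}$ is unbounded as $\gamma_1\to0$. At this $M$ your Step~3 bound exceeds the claimed right-hand side by the factor $\gamma_1^{-(1/q-1)/(4\alpha)}$. Options (b) and (c) therefore cannot rescue the argument; (a) points in the right direction but is left undeveloped. What your route actually proves, via the axis count $n_0\ge2\lceil\gamma_1^{1/(2\alpha)}M\rceil-1\ge\gamma_1^{1/(2\alpha)}M$, is the bound with $(\gamma_1M^{2\alpha})^{(1/q-1)/(2\alpha)}$ in the denominator. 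That is strictly weaker than the statement when $\gamma_1<1$, so your closing sentence (``either way, the final bound is of the stated form'') is unsupported.

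The missing idea is the Rankin trick you dismissed, and your reason for dismissing it is backwards. Since $2\alpha q>1$, we have $2\alpha(1-q)>1/q-1$, so for $M\ge1$ it gives a \emph{better} power of $M$, not a wrong one. For $M>1$ one has $\bszero\in\Acal_{d,\alpha,\bsgamma}(M)$, hence
\[
\sum_{\bsk\notin\Acal_{d,\alpha,\bsgamma}(M)}\frac{1}{r_{2\alpha,\bsgamma}(\bsk)}\le M^{-2\alpha(1-q)}\sum_{\bsk\neq\bszero}\frac{1}{r_{2\alpha,\bsgamma}(\bsk)^{q}}=M^{-2\alpha(1-q)}(S_q-1).
\]
Bernoulli's inequality $S_q^{1/q}\ge1+\frac1q(S_q-1)$ gives $(\frac1q-1)(S_q-1)\le S_q^{1/q}$, that is, $S_q-1\le\frac{q}{1-q}S_q^{1/q}$. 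Combined with $M^{-2\alpha(1-q)}\le M^{-(1/q-1)}$ and $\gamma_1^{-(1/q-1)/(4\alpha)}\ge1$, this yields the stated inequality for every $M>1$. It needs no lower bound on $|\Acal_{d,\alpha,\bsgamma}(M)|$ and makes no use of $\gamma_1$.

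Your concern about $M=1$ is justified, and no argument can handle that case. Because $r_{2\alpha,\bsgamma}\ge1$ everywhere, $\Acal_{d,\alpha,\bsgamma}(1)=\emptyset$, so the left-hand side contains the term $1/r_{2\alpha,\bsgamma}(\bszero)=1$. For example, with $d=1$, $\alpha=100$, $q=1/4$, $\gamma_1=10^{-8}$, the left-hand side exceeds $1$ while the right-hand side is about $0.41$. So the lemma holds only for $M>1$. That is the only case the paper uses ($M=N_2>1$); the paper itself gives no proof here and just cites \cite[Lemma~14.2]{DKP22}.
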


\begin{theorem}\label{thm:infty}
Let $\varepsilon_1$ be given by \eqref{eqn:failprob} and assume it 
is at most 1. Let $\alpha>1/2$ and $M\ge 1$ be given.
Then, for any $q\in(1/(2\alpha),1)$, with probability at least $1-\varepsilon_1$,
    $$\mathrm{err}(\Hcal_{d,\alpha,\bsgamma}, L_\infty, A_{N,\bsz_{\{1{:}R\}},\Acal_{d,\alpha,\bsgamma}(N_2)})\leq ( 2R+1) \left( \frac{1}{(\gamma_1^{1/(4\alpha)} N_2)^{(1/q-1)}}\frac{q}{1-q}\prod_{j=1}^d\left(1+2\gamma_j^{q} \zeta(2\alpha q)  \right)^{1/q} \right)^{1/2}. 
    $$
\end{theorem}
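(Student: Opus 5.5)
We work on the event of Corollary~\ref{cor:whp}, which has probability at least $1-\varepsilon_1$: every $\bsh\in\Acal:=\Acal_{d,\alpha,\bsgamma}(N_2)$ lies in $K_{d,\alpha,\bsgamma}(\bsz_r)$ for at least $\lceil R/2\rceil$ indices $r$. On this event we fix $f$ with $\|f\|_{d,\alpha,\bsgamma}\le 1$ and use the pointwise bound
\[
\|f-A(f)\|_{L_\infty}\le \sum_{\bsh\notin\Acal}|\widehat f(\bsh)| + \sum_{\bsh\in\Acal}\bigl|\widehat f_{N,\bsz_{\{1:R\}}}(\bsh)-\widehat f(\bsh)\bigr|,
\]
which is valid since $\sum_{\bsh}|\widehat f(\bsh)|<\infty$ for $\alpha>1/2$. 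We set $S:=\sum_{\bsk\notin\Acal} r_{2\alpha,\bsgamma}(\bsk)^{-1}$. By Cauchy--Schwarz the first term is at most $S^{1/2}$. Lemma~\ref{lem_sum} with $M=N_2$ gives exactly the bracket appearing in the statement, because $(\gamma_1^{1/2}N_2^{2\alpha})^{(1/q-1)/(2\alpha)}=(\gamma_1^{1/(4\alpha)}N_2)^{1/q-1}$. The goal is therefore to bound the second term by $2R\,S^{1/2}$.

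For the coefficient errors we treat the real and imaginary parts separately. The median of the real parts minus $\Re\widehat f(\bsh)$ equals the median of $\Re(\widehat f_{N,\bsz_r}(\bsh)-\widehat f(\bsh))$. Its absolute value is at most the median of $|\widehat f_{N,\bsz_r}(\bsh)-\widehat f(\bsh)|$, since the absolute value of a median is bounded by the median of absolute values. The same holds for the imaginary part. We then apply Lemma~\ref{lem:medtosum} with $J_{\bsh}:=\{r:\bsh\in K_{d,\alpha,\bsgamma}(\bsz_r)\}$, which has $|J_{\bsh}|\ge\lceil R/2\rceil$, to obtain
\[
\bigl|\widehat f_{N,\bsz_{\{1:R\}}}(\bsh)-\widehat f(\bsh)\bigr|\le 2\sum_{r\in J_{\bsh}}\sum_{\bsl\in P^\perp_{N,\bsz_r}\setminus\{\bszero\}}|\widehat f(\bsh+\bsl)|.
\]
Here we use the aliasing identity from the earlier remark; the factor $2$ can be reduced to $\sqrt2$, but $2$ suffices.

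Next we exchange the order of summation. For fixed $r$ the sum runs over $\bsh\in K_{d,\alpha,\bsgamma}(\bsz_r)$ and $\bsl\ne\bszero$ in the dual lattice. By the definition of $K_{d,\alpha,\bsgamma}(\bsz_r)$, $\bsk=\bsh+\bsl\notin\Acal$. Moreover, each $\bsk$ arises from at most one $\bsh\in K_{d,\alpha,\bsgamma}(\bsz_r)$: if $\bsh,\bsh'\in K_{d,\alpha,\bsgamma}(\bsz_r)$ satisfy $\bsh+\bsl=\bsh'+\bsl'$, then $\bsh'\in(\bsh+P^\perp_{N,\bsz_r})\cap\Acal$, which forces $\bsh'=\bsh$. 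Hence for each $r$ the double sum is at most $\sum_{\bsk\notin\Acal}|\widehat f(\bsk)|\le S^{1/2}$. Summing over $r\le R$ bounds the second term by $2R\,S^{1/2}$, and together with the first term this gives the factor $2R+1$.

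The step that needs the most care is this injectivity argument. Restricting to $J_{\bsh}$, rather than taking all $r$, is what prevents aliased frequencies inside $\Acal$ (where $\widehat f$ is not small) from appearing. This is exactly why the median and Corollary~\ref{cor:whp} are needed. Finally, the bound is uniform in $f$, so taking the supremum over the unit ball completes the proof on the stated event.
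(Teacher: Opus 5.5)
Your proposal is correct and follows essentially the same route as the paper. Both arguments make the same split into a truncation term and a coefficient-error term, and both use the bound $|\mathrm{median}|\le 2\,\mathrm{median}|\cdot|$ together with Lemma~\ref{lem:medtosum} on the event of Corollary~\ref{cor:whp}. Both then apply the aliasing identity with injectivity of $(\bsh,\bsl)\mapsto\bsh+\bsl$ on $K_{d,\alpha,\bsgamma}(\bsz_r)\times P^\perp_{N,\bsz_r}$, and finish with Cauchy--Schwarz and Lemma~\ref{lem_sum} at $M=N_2$.
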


\begin{proof}
    For $f\in \Hcal_{d,\alpha,\bsgamma}$,
    $$\Vert A_{N,\bsz_{\{1{:}R\}},\Acal_{d,\alpha,\bsgamma}(N_2)}(f)-f\Vert_{L_\infty}\leq \sum_{\bsh\in \Acal_{d,\alpha,\bsgamma}(N_2)} \abs{\widehat{f}_{N,\bsz_{\{1{:}R\}}}(\bsh)-\widehat{f}({\bsh})}+\sum_{\bsh\in \ZZ^d\setminus\Acal_{d,\alpha,\bsgamma}(N_2)} \abs{\widehat{f}({\bsh})}.$$

In the following we use that for a set of real numbers $x_1, x_2, \ldots, x_R$, with $R$ odd, we have $| \mathrm{median}_{r \in \{1:R\}} x_r | \le \mathrm{median}_{r \in \{1:R\}} |x_r|$. Then the complex median \eqref{eqn:complexmedian} satisfies
\begin{align*}
    \abs{\widehat{f}_{N,\bsz_{\{1{:}R\}}}(\bsh)-\widehat{f}({\bsh})} & \leq \mathrm{median}_{r\in\{1{:}R\}} \abs{\Re\left(\widehat{f}_{N,\bsz_{r}}(\bsh)-\widehat{f}({\bsh})\right)}+\mathrm{median}_{r\in\{1{:}R\}} \abs{\Im\left(\widehat{f}_{N,\bsz_{r}}(\bsh)-\widehat{f}({\bsh})\right)}\\
    & \leq 2 \ \mathrm{median}_{r\in\{1{:}R\}} \abs{\widehat{f}_{N,\bsz_{r}}(\bsh)-\widehat{f}({\bsh})}.
\end{align*}
By Corollary~\ref{cor:whp} and Lemma~\ref{lem:medtosum}, with probability at least $1-\varepsilon_1$, it holds that
    \begin{align*}
        \sum_{\bsh\in \Acal_{d,\alpha,\bsgamma}(N_2)} \abs{\widehat{f}_{N,\bsz_{\{1{:}R\}}}(\bsh)-\widehat{f}({\bsh})} & \leq 2\sum_{\bsh\in \Acal_{d,\alpha,\bsgamma}(N_2)}  \mathrm{median}_{r\in\{1{:}R\}} \abs{\widehat{f}_{N,\bsz_r}(\bsh)-\widehat{f}({\bsh})} \\
        & \leq 2 \sum_{r=1}^R \sum_{\bsh\in K_{d,\alpha,\bsgamma}(\bsz_r)} \abs{\widehat{f}_{N,\bsz_r}(\bsh)-\widehat{f}({\bsh})}\\
        & \leq 2 \sum_{r=1}^R \sum_{\bsh\in K_{d,\alpha,\bsgamma}(\bsz_r)} \sum_{\bsl\in P^\perp_{N,\bsz_r}\setminus\{\bszero\} }\abs{\widehat{f}({\bsh+\bsl})}\\
        & \leq 2R \sum_{\bsh\in \ZZ^d\setminus\Acal_{d,\alpha,\bsgamma}(N_2)} \abs{\widehat{f}({\bsh})},
    \end{align*}
    where the last inequality follows because $(\bsh,\bsl) \to \bsh+\bsl$ is injective over $K_{d,\alpha,\bsgamma}(\bsz_r)\times  P^\perp_{N,\bsz_r} $, and $\bsh+\bsl\in \Acal_{d,\alpha,\bsgamma}(N_2)$ holds only if $\bsl=\bszero$. Therefore, using the Cauchy-Schwarz inequality,
    \begin{align*}
        \Vert A_{N,\bsz_{\{1{:}R\}},\Acal_{d,\alpha,\bsgamma}(N_2)}(f)-f\Vert_{L_\infty} & \leq  (2 R+1)\sum_{\bsh\in \ZZ^d\setminus\Acal_{d,\alpha,\bsgamma}(N_2)} \abs{\widehat{f}({\bsh})} \\
        & \leq (2 R+1) \|f\|_{d,\alpha,\bsgamma} \left(\sum_{\bsh\in \ZZ^d\setminus\Acal_{d,\alpha,\bsgamma}(N_2)} \frac{1}{r_{2\alpha,\bsgamma}(\bsh)}\right)^{1/2}.
    \end{align*}
    The conclusion follows immediately from Lemma~\ref{lem_sum}.
\end{proof}
Since $q$ can be arbitrarily close to $1/(2\alpha)$, $\mathrm{err}(\Hcal_{d,\alpha,\bsgamma}, L_\infty, A_{N,\bsz_{\{1{:}R\}},\Acal_{d,\alpha,\bsgamma}(N_2)})$ is nearly $O(N^{-\alpha+1/2})$ with probability $1-\varepsilon_1$. The bound is independent of the weights if $\sum_{j=1}^\infty \gamma_j^{1/(2\alpha)} < \infty$.

\subsection{$L_2$-approximation}

For $\bsl\in \ZZ^d$, we let
$$\rho_{2\alpha, \bsgamma}(\bsz)=\min_{\bsl\in P^\perp_{N,\bsz}\setminus\{\bszero\}} r_{2\alpha,\bsgamma}(\bsl).$$

The following result is shown in \cite[Eq. (3.9)]{CDG25}. We state it as a lemma.
\begin{lemma}
Let $\bsz \in \{1{:}(N-1)\}^d$. Let $\bsgamma = (\gamma_1, \gamma_2, \ldots)$ with $\gamma_1 \ge \gamma_2 \ge \cdots$. Let $Q > 1$, $\beta>1/2$, and $\rho_{\beta, \bsgamma}(\bsz) > 1$. Let $m \in \mathbb{Z}$ be such that $2^{m-1} < M\le 2^m$. Then
\begin{equation*}
\sup_{\bsh \in \mathbb{Z}^d} \left| \bsl \in \bsh + P^\perp_{N, \bsz} : r_{\beta, \bsgamma}(\bsl) \le Q \right|   \le \frac{2^m}{m} \min \left\{ \frac{(1 + m/2)^d}{\rho_{\beta, \bsgamma}(\bm{z})^{1/\beta}}, \prod_{j=1}^d (1 + \gamma_j^{1/\beta} m)  \right\}.
\end{equation*}
\end{lemma}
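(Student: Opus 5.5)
The plan is to prove the two bounds inside the minimum separately. I read the condition on $m$ as $2^{m-1}<Q\le 2^m$; the statement writes $M$, which I take to be $Q$. I also use the normalisation of $r_{\beta,\bsgamma}$ from \cite{CDG25}, in which the level set $\{r_{\beta,\bsgamma}(\bsl)\le Q\}$ is a weighted hyperbolic cross at level $2^m$. Fix $\bsh\in\ZZ^d$ and set
$$S_{\bsh}:=\{\bsl\in\bsh+P^\perp_{N,\bsz}\,:\, r_{\beta,\bsgamma}(\bsl)\le Q\}.$$
Both bounds will come from a dyadic decomposition of the hyperbolic cross $\{\bsl: r_{\beta,\bsgamma}(\bsl)\le Q\}$. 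Split $\ZZ^d$ into dyadic blocks
$$B_{\bsk}=\{\bsl\,:\, 2^{k_j-1}\le |l_j|<2^{k_j}\ \text{for } k_j\ge 1,\ l_j=0 \text{ for } k_j=0\},\qquad \bsk\in\NN_0^d.$$
The condition $r_{\beta,\bsgamma}(\bsl)\le Q$ forces $\sum_{j\in\supp(\bsk)}(k_j-1)\lesssim m+\log_2\gamma_{\supp(\bsk)}^{1/\beta}$, so only blocks with $|\bsk|_1$ of order $m$ (adjusted by the weights) can meet the cross.

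For the product bound I drop the lattice structure and use $|S_{\bsh}|\le |\{\bsl: r_{\beta,\bsgamma}(\bsl)\le Q\}|$. I would then count the weighted hyperbolic cross by induction on $d$, or by summing $|B_{\bsk}|=\prod_{j\in\supp(\bsk)}2^{k_j}$ over admissible $\bsk$. Adding coordinate $j$ multiplies the count by a factor of the form $1+\gamma_j^{1/\beta}\cdot(\text{number of dyadic levels})$, with at most $m$ levels. This gives a bound $\lesssim 2^m\prod_j(1+\gamma_j^{1/\beta}m)$. To recover the sharper prefactor $2^m/m$ I would not sum crudely. Instead I would use the standard estimate $\sum_{n\le m}2^n\binom{n+d-1}{d-1}\le 2\cdot 2^m\binom{m+d-1}{d-1}$ and compare $\binom{m+d-1}{d-1}$ with $\prod_j(1+\gamma_j^{1/\beta}m)/m$.

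For the $\rho$-bound I use the lattice. If $\bsl,\bsl'\in S_{\bsh}$ are distinct, then $\bsl-\bsl'\in P^\perp_{N,\bsz}\setminus\{\bszero\}$, and hence $r_{\beta,\bsgamma}(\bsl-\bsl')\ge\rho_{\beta,\bsgamma}(\bsz)$. Within a single block $B_{\bsk}$ I refine further into sub-boxes. I choose the sub-boxes so that any two points in the same sub-box differ by a vector whose coordinates satisfy $|\Delta_j|<2^{k_j}$ with $\prod_j\max(1,|\Delta_j|)<\rho_{\beta,\bsgamma}(\bsz)^{1/\beta}$. Such a sub-box contains at most one point of $S_{\bsh}$. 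The block $B_{\bsk}$, of volume about $2^{|\bsk|_1}\approx 2^m$, can be covered by about $2^{m}/\rho_{\beta,\bsgamma}(\bsz)^{1/\beta}$ such sub-boxes, using $\rho>1$ so that this number is at least one. Summing over the roughly $(1+m/2)^d$ relevant blocks, up to the factor $1/m$ obtained from the same binomial comparison as before, gives the first term of the minimum. Since I drop the weights here (they are at most $1$), this bound is weight-free apart from $\rho$.

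I expect the main obstacle to be the bookkeeping of constants. The crude arguments above give the correct shape, but exact constants such as $1+m/2$ and the $1/m$ gain need a careful count of admissible $\bsk$: how many $\bsk$ have a given $|\bsk|_1$, how the boundary blocks are handled, and where the sub-boxes of a block straddle its edges. My first attempt would be to count lattice points in each whole block, with the covering done by axis-parallel boxes of sides $2^{a_j}$ where $\sum_j a_j=\lfloor\log_2\rho^{1/\beta}\rfloor$. I would then sum the geometric series in $|\bsk|_1$ exactly, following the corresponding computation in \cite{CDG25}.
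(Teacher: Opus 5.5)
The paper gives no proof of this lemma; it quotes it from \cite[Eq.~(3.9)]{CDG25}. The main problem with your plan is that the displayed inequality is false. The exact constants you postpone to ``bookkeeping'' therefore cannot be reached by any argument.

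Write $\rho=\rho_{\beta,\bsgamma}(\bsz)$. Since $\bszero\in\bszero+P^\perp_{N,\bsz}$ and $r_{\beta,\bsgamma}(\bszero)=1<Q$, the left-hand side is always at least $1$. The first term of the minimum, however, tends to $0$ as $\rho\to\infty$. Take $d=1$, $\gamma_1=1$, $\beta=1$, $Q=2$ (so $m=1$ under either reading of $M$) and a prime $N\ge 5$. Then $P^\perp_{N,\bsz}=N\ZZ$, $\rho=N$, and the first term equals $3/N<1$.

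The constant is wrong even when $\rho^{1/\beta}\le 2^m$. For $N=3$ and $Q=4$ ($m=2$), the coset $3\ZZ$ contains the three admissible points $-3,0,3$, against a bound of $4/3$. For $Q=2^m$ it contains $2\lfloor 2^m/3\rfloor+1\approx 2^{m+1}/3$ points, against roughly $2^{m-1}/3$.

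In your sketch this shows up at ``using $\rho>1$ so that this number is at least one''. The condition $\rho>1$ does not give $\rho^{1/\beta}\le 2^m$. Every block meeting the cross costs at least one sub-box, so the honest count is $\sum_{\bsk}\max\{1,|B_\bsk|/\rho^{1/\beta}\}$ over all such blocks, lower-level ones included. Already for $d=1$ your covering yields about $2^{m+1}/\rho^{1/\beta}$, four times the claim. The realistic target is an inequality of this shape up to $d$-dependent constants, with a correction when $\rho^{1/\beta}$ exceeds $2^m$. The paper only uses the lemma in the proof of Theorem~\ref{thm:2}, where multiplicative constants are absorbed into $C'_{d,\alpha}$.

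Independently of this, two steps of your route fail as planned.

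\emph{Product term.} Bounding $|S_\bsh|$ by the size of the whole weighted hyperbolic cross already loses a factor $2$ in $d=1$. For $\gamma_1=1$ and $Q^{1/\beta}=2$ ($m=1$), the cross is $\{-2,\dots,2\}$ with $5$ points, while $\frac{2^m}{m}(1+\gamma_1^{1/\beta}m)=4$. So the second term can only come from the coset structure. One usable fact: two points of $\bsh+P^\perp_{N,\bsz}$ that differ in a single coordinate $j$ differ there by a multiple of $N$, since $N$ is prime and $z_j\not\equiv 0\tmod N$. Your comparison of $\binom{m+d-1}{d-1}$ with $\prod_{j}(1+\gamma_j^{1/\beta}m)/m$ also fails for small weights, where the product is close to $1$. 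The weights have to enter through the admissible levels $\sum_{j\in\mathfrak u}(k_j-1)\le m+\log_2\gamma_{\mathfrak u}^{1/\beta}$ of the blocks with $\supp(\bsk)=\mathfrak u$, not through a weight-free count.

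\emph{The $\rho$-term.} Dropping the weights ``since they are at most $1$'' goes in the wrong direction. For $\bsl\neq\bsl'$,
\[
r_{\beta,\bsgamma}(\bsl-\bsl')=\gamma_{\supp(\bsl-\bsl')}^{-1}\prod_{j}\max(1,|l_j-l'_j|)^{\beta}\ge\prod_{j}\max(1,|l_j-l'_j|)^{\beta}.
\]
So your sub-box condition $\prod_j\max(1,|l_j-l'_j|)<\rho^{1/\beta}$ does not force $r_{\beta,\bsgamma}(\bsl-\bsl')<\rho$. On blocks with $\supp(\bsk)=\mathfrak u$, the sub-boxes must have volume about $\gamma_{\mathfrak u}^{1/\beta}\rho^{1/\beta}$. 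This is harmless, because the volume of such a block meeting the cross carries the same factor $\gamma_{\mathfrak u}^{1/\beta}$.

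\emph{Reading of $M$.} $M$ should be read as $Q^{1/\beta}$, not $Q$. This is what the paper's specialization $Q=L^{2\alpha}$, $2^{m-1}<L\le 2^m$ shows.
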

In the notation of the present paper, where $\beta$ is replaced by $2 \alpha$, $Q = L^{2\alpha}$, and hence $2^{m-1} < L \le 2^m$, this reads
\begin{equation}\label{fiber:bound}
\sup_{\bsh \in \mathbb{Z}^d} \left| \bsl \in \bsh + P^\perp_{N, \bsz} : r_{2 \alpha, \bsgamma}(\bsl) \le L^{2\alpha}  \right|  \le \frac{2^m}{m} \min \left\{ \frac{(1 + m/2)^d}{\rho_{2 \alpha, \bsgamma}(\bm{z})^{1/(2\alpha) }}, \prod_{j=1}^d (1 + \gamma_j^{1/(2\alpha) } m)  \right\}.
\end{equation}

For $L_\infty$-approximation it was sufficient to show that for each $\bsh \in \Acal_{d,\alpha, \bsgamma}(N_2)$, more than half of the generating vectors satisfy $\bsh \in K_{d,\alpha, \bsgamma}(\bsz_r)$, which was shown in Corollary~\ref{cor:whp}. However, for $L_2$-approximation we need the additional property that for all generating vectors the figure of merit satisfies $\rho_{2\alpha, \bsgamma}(\bsz_r) \ge N_2^{2\alpha}$, which we show in the following lemma. Ensuring that the figure of merit is sufficiently large will allow us to improve the convergence rate of the $L_2$-approximation error by a factor of $N_2^{1/2}$ compared to the $L_\infty$-approximation error.

\begin{lemma}\label{cor:whp2}
    Let 
    \begin{equation}\label{eqn:failprob2}
      \varepsilon_2:=\frac{|\Acal_{d,\alpha,\bsgamma}(N_2)|}{2}\left(\frac{8}{1+\tau\log N_2}\right)^{\lceil R/2 \rceil}.  
    \end{equation}
    If $\varepsilon_2\leq 1$, then with probability at least $1- \varepsilon_2$, 
    \[
      \min_{\bsh\in \Acal_{d,\alpha,\bsgamma}(N_2)} \sum_{r\in \mathcal{R}} \bsone\{\bsh\in K_{d,\alpha,\bsgamma}(\bsz_r)\}\geq \lceil R/2 \rceil
    \]
    for 
    \[
      \mathcal{R}=\left\{r\in \{1{:}R\} \,:\, \rho_{2\alpha, \bsgamma}(\bsz_r)\geq N^{2\alpha}_2 \right\}.
    \]
\end{lemma}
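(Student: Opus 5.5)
Mirroring Corollary~\ref{cor:whp}, the plan is to define a per-repetition "good" event. For each $\bsh\in\Acal_{d,\alpha,\bsgamma}(N_2)$ and each $r$, call repetition $r$ \emph{good for} $\bsh$ if both $\bsh\in K_{d,\alpha,\bsgamma}(\bsz_r)$ and $\rho_{2\alpha,\bsgamma}(\bsz_r)\ge N_2^{2\alpha}$. The conclusion of the lemma states exactly that, for every $\bsh$, at least $\lceil R/2\rceil$ repetitions are good for $\bsh$. The key step is a single-repetition bound
\[
\Pr\big(\text{$r$ not good for $\bsh$}\big)\le \frac{2}{1+\tau\log N_2}.
\]
Since the $\bsz_r$ are independent, the rest then follows verbatim from the proof of Corollary~\ref{cor:whp}. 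At most $2^R$ subsets of at least $\lceil R/2\rceil$ bad repetitions have to be considered, so the failure probability for a fixed $\bsh$ is at most $2^R(2/(1+\tau\log N_2))^{\lceil R/2\rceil}\le \frac12(8/(1+\tau\log N_2))^{\lceil R/2\rceil}$. This uses $2^R=2^{2\lceil R/2\rceil-1}$ for odd $R$. A union bound over $\bsh\in\Acal_{d,\alpha,\bsgamma}(N_2)$ then gives $\varepsilon_2$.

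For the single-repetition bound I would use a union bound over the two failure modes. Lemma~\ref{lem:reconstruction} already gives $\Pr(\bsh\notin K_{d,\alpha,\bsgamma}(\bsz))\le 1/(1+\tau\log N_2)$. It therefore remains to show
\[
\Pr\big(\rho_{2\alpha,\bsgamma}(\bsz)<N_2^{2\alpha}\big)\le \frac{1}{1+\tau\log N_2}.
\]
The event $\rho_{2\alpha,\bsgamma}(\bsz)<N_2^{2\alpha}$ means that some nonzero $\bsl\in\Acal_{d,\alpha,\bsgamma}(N_2)$ lies in $P^\perp_{N,\bsz}$. For each fixed $\bsl\ne\bszero$ in $\Acal_{d,\alpha,\bsgamma}(N_2)$, the projection argument in the proof of Lemma~\ref{lem:reconstruction} shows $|l_j|\le (N-1)/2$ for all $j$, and hence $N\nmid\bsl$. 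So some coordinate $l_j\not\equiv0\pmod N$, and conditioning on the other components gives $\Pr(\bsz\cdot\bsl\equiv0\pmod N)\le 1/(N-1)$. The union bound over the $|\Acal_{d,\alpha,\bsgamma}(N_2)|-1$ nonzero elements, combined with Lemma~\ref{lem:Adcount}, yields $(|\Acal_{d,\alpha,\bsgamma}(N_2)|-1)/(N-1)\le 1/(1+\tau\log N_2)$, which is the same computation as before.

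I expect the main obstacle to be the boundary case of the inequality. The set $\Acal_{d,\alpha,\bsgamma}(N_2)$ is defined with a strict inequality $r_{2\alpha,\bsgamma}<N_2^{2\alpha}$. Thus $\rho\ge N_2^{2\alpha}$ is exactly the statement that $P^\perp_{N,\bsz}\setminus\{\bszero\}$ avoids $\Acal_{d,\alpha,\bsgamma}(N_2)$, so the definitions match without any slack. Apart from checking this, and verifying that the constant $2^R\cdot 2^{\lceil R/2\rceil}=\frac12 8^{\lceil R/2\rceil}$ works out, the argument should be routine.
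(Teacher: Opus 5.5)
Your proposal is correct and is essentially the paper's proof. The paper notes that $\bszero\in K_{d,\alpha,\bsgamma}(\bsz_r)$ forces $\rho_{2\alpha,\bsgamma}(\bsz_r)\ge N_2^{2\alpha}$ (as you observe, the two are in fact equivalent) and therefore simply applies Lemma~\ref{lem:reconstruction} at $\bsh=\bszero$, which is exactly the union bound over nonzero $\bsl\in\Acal_{d,\alpha,\bsgamma}(N_2)$ that you redo by hand; it then runs the same $2^R\bigl(2/(1+\tau\log N_2)\bigr)^{\lceil R/2\rceil}$ count and union bound over $\bsh$.
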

\begin{proof}
For any $r\in \{1{:}R\}$, when $\bszero\in K_{d,\alpha,\bsgamma}(\bsz_r)$, no $\bsl\in P^\perp_{N,\bsz_r}$ other than $\bszero$ belong to $\Acal_{d,\alpha,\bsgamma}(N_2)$, so
$$\rho_{2\alpha, \bsgamma}(\bsz_r)=\min_{\bsl\in P^\perp_{N,\bsz_r}\setminus\{\bszero\}} r_{2\alpha,\bsgamma}(\bsl)\geq N^{2\alpha}_2$$
and $r\in\mathcal{R}$. Consequently, for any $\bsh\in  \Acal_{d,\alpha,\bsgamma}(N_2)$,
\[
  \sum_{r\in \mathcal{R}} \bsone\{\bsh\in K_{d,\alpha,\bsgamma}(\bsz_r)\} \ge 
  \sum_{r\in \{1:R\}} \bsone\{\bszero\in K_{d,\alpha,\bsgamma}(\bsz_r)\text{ and }\bsh\in K_{d,\alpha,\bsgamma}(\bsz_r)\}.
\]
The probability that
\[
\sum_{r\in \{1:R\}} \bsone\{\bszero\in K_{d,\alpha,\bsgamma}(\bsz_r)\text{ and }\bsh\in K_{d,\alpha,\bsgamma}(\bsz_r)\} < \lceil R/2 \rceil 
\]
for at least one $\bsh\in \Acal_{d,\alpha,\bsgamma}(N_2)$ is equivalently the probability that 
\begin{equation}\label{eq:probability_converse}
 \sum_{r\in \{1:R\}} \bsone\{\bszero\notin K_{d,\alpha,\bsgamma}(\bsz_r)\text{ or }\bsh\notin K_{d,\alpha,\bsgamma}(\bsz_r)\} \ge \lceil R/2 \rceil .
\end{equation}
for at least one $\bsh\in \Acal_{d,\alpha,\bsgamma}(N_2)$.
However, Lemma~\ref{lem:reconstruction} implies for every $r\in \{1{:}R\}$ and $\bsh\in  \Acal_{d,\alpha,\bsgamma}(N_2)$,
$$\Pr\left(\bszero\notin K_{d,\alpha,\bsgamma}(\bsz_r)\text{ or } \bsh\notin K_{d,\alpha,\bsgamma}(\bsz_r)\right)\leq \frac{2}{1+\tau\log N_{2}}.$$
Therefore, the probability that \eqref{eq:probability_converse} holds for a given $\bsh\in \Acal_{d,\alpha,\bsgamma}(N_2)$ is bounded from above by 
$$2^R \left(\frac{2}{1+\tau\log N_{2}}\right)^{\lceil R/2 \rceil}\leq \frac{1}{2}\left(\frac{8}{1+\tau\log N_{2}}\right)^{\lceil R/2 \rceil}.$$
The conclusion follows after taking a union bound over $\bsh\in  \Acal_{d,\alpha,\bsgamma}(N_2)$.
\end{proof}

\begin{theorem}\label{thm:2}
With probability at least $1-\varepsilon_2$ for $\varepsilon_2$ given by \eqref{eqn:failprob2},
$$    \mathrm{err}(\Hcal_{d,\alpha,\bsgamma}, L_2,  A_{N,\bsz_{\{1{:}R\}},\Acal_{d,\alpha,\bsgamma}(N_2)}) \leq     C_{d, \alpha}  \frac{ (1+\log_2 N_2)^{\frac{d-1}{2}}}{N_2^{ \alpha}}, $$ where $C_{d, \alpha} > 0$ is a constant independent of $N$ and $N_2$ but dependent on $d$ and $\alpha$.
\end{theorem}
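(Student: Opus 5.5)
The plan is to split the error by Parseval into a truncation part and an aliasing part, and to control the aliasing part on the high-probability event of Lemma~\ref{cor:whp2}.

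For $f$ in the unit ball of $\Hcal_{d,\alpha,\bsgamma}$, Parseval gives
\[
\|f-A(f)\|_{L_2}^2=\sum_{\bsh\in\Acal_{d,\alpha,\bsgamma}(N_2)}\abs{\widehat f_{N,\bsz_{\{1{:}R\}}}(\bsh)-\widehat f(\bsh)}^2+\sum_{\bsh\notin\Acal_{d,\alpha,\bsgamma}(N_2)}\abs{\widehat f(\bsh)}^2 .
\]
The truncation term is at most $N_2^{-2\alpha}\|f\|^2_{d,\alpha,\bsgamma}$, since $r_{2\alpha,\bsgamma}(\bsh)\ge N_2^{2\alpha}$ outside $\Acal_{d,\alpha,\bsgamma}(N_2)$.

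For the aliasing term, I would work on the event of Lemma~\ref{cor:whp2}, which has probability at least $1-\varepsilon_2$. Exactly as in the proof of Theorem~\ref{thm:infty}, the complex median error is at most $2\,\mathrm{median}_r\abs{\widehat f_{N,\bsz_r}(\bsh)-\widehat f(\bsh)}$. By Lemma~\ref{lem:medtosum}, now applied to the squares $x_r^2$ and with $J=\{r\in\mathcal R:\bsh\in K_{d,\alpha,\bsgamma}(\bsz_r)\}$, this gives
\[
\sum_{\bsh\in\Acal}\abs{\widehat f_{N,\bsz_{\{1{:}R\}}}(\bsh)-\widehat f(\bsh)}^2\le 4\sum_{r\in\mathcal R}\ \sum_{\bsh\in K_{d,\alpha,\bsgamma}(\bsz_r)}\Big(\sum_{\bsl\in P^\perp_{N,\bsz_r}\setminus\{\bszero\}}\abs{\widehat f(\bsh+\bsl)}\Big)^2 .
\]
It therefore suffices to bound, for a single $\bsz$ with $\rho_{2\alpha,\bsgamma}(\bsz)\ge N_2^{2\alpha}$, the quantity $\sum_{\bsh\in K(\bsz)}\big(\sum_{\bsl\ne\bszero}\abs{\widehat f(\bsh+\bsl)}\big)^2$ by $C N_2^{-2\alpha}(1+\log_2N_2)^{d-1}\|f\|^2$.

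For fixed $\bsh\in K(\bsz)$, every $\bsk=\bsh+\bsl$ with $\bsl\neq\bszero$ lies outside $\Acal$, so $r_{2\alpha,\bsgamma}(\bsk)\ge N_2^{2\alpha}$. I would decompose these $\bsk$ dyadically into shells
\[
S_m(\bsh)=\{\bsk\in\bsh+P^\perp_{N,\bsz}:\ 2^{m-1}N_2<r_{1,\bsgamma^{1/(2\alpha)}}(\bsk)\le 2^mN_2\},\qquad m\ge1 .
\]
Applying Cauchy--Schwarz on each shell with weights $r_{2\alpha,\bsgamma}$, and then Cauchy--Schwarz over $m$ with an extra geometric weight $2^{-\epsilon m}$, gives
\[
\Big(\sum_{\bsl\ne\bszero}\abs{\widehat f(\bsh+\bsl)}\Big)^2\lesssim\sum_m 2^{\epsilon m}\,\frac{|S_m(\bsh)|}{(2^{m-1}N_2)^{2\alpha}}\sum_{\bsk\in S_m(\bsh)}\abs{\widehat f(\bsk)}^2r_{2\alpha,\bsgamma}(\bsk).
\]
The fiber bound \eqref{fiber:bound} controls $|S_m(\bsh)|$. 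Its first branch uses $\rho_{2\alpha,\bsgamma}(\bsz)^{1/(2\alpha)}\ge N_2$ and gives $|S_m(\bsh)|\lesssim 2^{m}N_2\,(1+\log_2(2^mN_2))^{d-1}/N_2\approx 2^m(m+\log_2N_2)^{d-1}$. The dyadic shell parameter in \eqref{fiber:bound} must be taken as $\log_2(2^mN_2)$, and the resulting $1/\log$ factor absorbs one power of the logarithm. Since the sets $\bsh+P^\perp_{N,\bsz}$ for distinct $\bsh\in K(\bsz)$ are disjoint, summing over $\bsh$ yields at most
\[
N_2^{-2\alpha}\sup_m 2^{m(1+\epsilon-2\alpha)}(m+\log_2N_2)^{d-1}\,\|f\|^2 .
\]
Because $\alpha>1/2$ we can choose $\epsilon<2\alpha-1$, and the supremum is then $\lesssim(1+\log_2N_2)^{d-1}$ with a constant depending on $d,\alpha$. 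Summing over $r\in\mathcal R$ contributes a factor $R$, which can be absorbed into the constant, and taking square roots gives the claim.

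The main obstacle is this fiber-counting step: matching the hypotheses of \eqref{fiber:bound}, in particular $\rho>1$ and the correct dyadic index, and tracking the logarithmic powers so that exactly $(d-1)/2$ survives. If the shell bookkeeping loses a logarithm, I would instead absorb it through the slack $2^{-m(2\alpha-1-\epsilon)}$, as sketched above.
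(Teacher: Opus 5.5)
Your proposal is correct and follows essentially the same route as the paper: the Parseval split, the event of Lemma~\ref{cor:whp2} combined with Lemma~\ref{lem:medtosum}, the aliasing expansion, disjointness of the cosets $\bsh+P^\perp_{N,\bsz_r}$, and the first branch of \eqref{fiber:bound} with $\rho_{2\alpha,\bsgamma}(\bsz_r)^{1/(2\alpha)}\ge N_2$ and dyadic index $m_0+m$. The only difference is cosmetic: the paper applies a single weighted Cauchy--Schwarz over the whole coset, which reduces everything to $\sup_{\bsh}\sum_{\bsl\ne\bszero}1/r_{2\alpha,\bsgamma}(\bsh+\bsl)$ and then sums the dyadic series, so it avoids your auxiliary $2^{-\epsilon m}$ weights; also, make your shells half-open on the other side so that frequencies with $r_{1,\bsgamma^{1/(2\alpha)}}(\bsk)=N_2$ are not dropped.
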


\begin{proof}
     For $f\in \Hcal_{d,\alpha,\bsgamma}$, we have
    \begin{equation}\label{eqn:L2errordecomp}
         \Vert A_{N,\bsz_{\{1{:}R\}},\Acal_{d,\alpha,\bsgamma}(N_2)}(f)-f\Vert^2_{L_2} = \sum_{\bsh\in \Acal_{d,\alpha,\bsgamma}(N_2)} \abs{\widehat{f}_{N,\bsz_{\{1{:}R\}}}(\bsh)-\widehat{f}({\bsh})}^2+\sum_{\bsh\in \ZZ^d\setminus\Acal_{d,\alpha,\bsgamma}(N_2)} \abs{\widehat{f}({\bsh})}^2.
    \end{equation}
    The second term on the right hand side can be bounded by
    \begin{equation*}
        \sum_{\bsh\in \ZZ^d\setminus\Acal_{d,\alpha,\bsgamma}(N_2)} \abs{\widehat{f}({\bsh})}^2\leq \frac{1}{N^{2\alpha}_2}\sum_{\bsh\in \ZZ^d\setminus\Acal_{d,\alpha,\bsgamma}(N_2)} \abs{\widehat{f}({\bsh})}^2r_{2\alpha,\bsgamma}(\bsh)\leq  \frac{\|f\|^2_{d,\alpha,\bsgamma}}{N^{2\alpha}_2}.
    \end{equation*}
    For the first term, Lemma~\ref{cor:whp2} together with Lemma~\ref{lem:medtosum} shows that with probability at least $1-\varepsilon_2$, 
        \begin{align*}
         & \sum_{\bsh\in \Acal_{d,\alpha,\bsgamma}(N_2)} \abs{\widehat{f}_{N,\bsz_{\{1{:}R\}}}(\bsh)-\widehat{f}({\bsh})}^2 \\
         & \leq 2\sum_{\bsh\in \Acal_{d,\alpha,\bsgamma}(N_2)}  \mathrm{median}_{r\in\{1{:}R\}} \abs{\widehat{f}_{N,\bsz_r}(\bsh)-\widehat{f}({\bsh})}^2 \\
         & \leq 2 \sum_{r\in\mathcal{R}} \sum_{\bsh\in K_{d,\alpha,\bsgamma}(\bsz_r)} \abs{\widehat{f}_{N,\bsz_r}(\bsh)-\widehat{f}({\bsh})}^2\\
         & \leq 2  \sum_{r\in\mathcal{R}} \sum_{\bsh\in K_{d,\alpha,\bsgamma}(\bsz_r)} \left(\sum_{\bsl\in P^\perp_{N,\bsz_r}\setminus\{\bszero\} } \abs{\widehat{f}({\bsh+\bsl})}\right)^2 \\
         & \leq 2  \sum_{r\in\mathcal{R}} \sum_{\bsh\in K_{d,\alpha,\bsgamma}(\bsz_r)} \left(\sum_{\bsl\in P^\perp_{N,\bsz_r}\setminus\{\bszero\} } \abs{\widehat{f}({\bsh+\bsl})}^2 r_{2\alpha,\bsgamma}(\bsh+\bsl) \right) \left(\sum_{\bsl\in P^\perp_{N,\bsz_r}\setminus\{\bszero\} }\frac{1}{r_{2\alpha,\bsgamma}(\bsh+\bsl)} \right)\\
         & \leq 2 R \|f\|^2_{d,\alpha,\bsgamma}  \sup_{r \in \mathcal{R} }\sup_{\bsh\in K_{d,\alpha,\bsgamma}(\bsz_r)} \sum_{\bsl\in P^\perp_{N,\bsz_r}\setminus\{\bszero\} }\frac{1}{r_{2\alpha,\bsgamma}(\bsh+\bsl)}.
    \end{align*}
    Because $\bsh+\bsl\notin \Acal_{d,\alpha,\bsgamma}(N_2)$ for $\bsh\in K_{d,\alpha,\bsgamma}(\bsz_r)$ and $\bsl\in P^\perp_{N,\bsz_r}\setminus\{\bszero\}$,
 \begin{align*}
    \sup_{\bsh\in K_{d,\alpha,\bsgamma}(\bsz_r)} \sum_{\bsl\in P^\perp_{N,\bsz_r}\setminus\{\bszero\} }\frac{1}{r_{2\alpha,\bsgamma}(\bsh+\bsl)} & \leq \sup_{\bsh\in \ZZ^d} \sum_{\bsl'\in (\bsh+ P^\perp_{N,\bsz_r})\setminus \Acal_{d,\alpha,\bsgamma}(N_2)}\frac{1}{r_{2\alpha,\bsgamma}(\bsl')} \\
    & \leq  \sum_{m=0}^\infty \frac{1}{(2^m N_{2})^{2\alpha} } |(\bsh+ P^\perp_{N,\bsz_r})\cap \Acal_{d,\alpha,\bsgamma}(2^{m+1}N_{2})|.    
    \end{align*}
Letting $m_0 \in \mathbb{Z}$ be such that $2^{m_0-1} < N_2 \le 2^{m_0}$, we apply \eqref{fiber:bound} to each $r\in \mathcal{R}$ and bound
   \begin{align}\label{ineq_min}
       & \sup_{r\in \mathcal{R} } \sup_{\bsh \in K_{d,\alpha,\bsgamma}(\bsz_r)} \sum_{\bsl\in P^\perp_{N,\bsz_r}\setminus\{\bszero\} }\frac{1}{r_{2\alpha,\bsgamma}(\bsh+\bsl)} \nonumber \\
       & \leq \sup_{r \in \mathcal{R}} \sum_{m=0}^\infty \frac{1}{(2^m N_{2})^{2\alpha} }   \frac{2^{m_0+m}}{m_0+m} \min \left\{ \frac{(1 + (m_0+m) /2)^d}{\rho_{2 \alpha, \bsgamma}(\bm{z}_r )^{1/(2\alpha) }}, \prod_{j=1}^d (1 + \gamma_j^{1/(2\alpha) } (m_0+m) )  \right\} \nonumber \\ 
       & \leq \frac{2}{N_2^{2\alpha-1}} \sup_{r \in \mathcal{R}} \sum_{m=0}^\infty \frac{1}{ 2^{m (2\alpha-1)}  }   \frac{1}{m_0+m}  \frac{(1 + (m_0+m) /2)^d}{\rho_{2 \alpha, \bsgamma}(\bm{z}_r )^{1/(2\alpha) }}  \\
       & \leq \frac{2}{N_2^{2\alpha}} \sum_{m=0}^\infty \frac{1}{ 2^{m (2\alpha-1)}  }     \frac{(1 + (m_0+m) /2)^d}{ m_0 + m } \nonumber \\
       & \leq C'_{d, \alpha} \frac{ (1+\log_2 N_2)^{d-1}}{N_2^{2 \alpha }} \nonumber
 \end{align}
 for a positive constant $C'_{d, \alpha}$.
  Here we used $1\leq m_0 < 1 +  \log_2 N_2$ and
 \begin{align*}
 m_0^{-d+1} \sum_{m=0}^\infty \frac{1}{ 2^{m (2\alpha-1)}  } \frac{ ( 1 + (m_0+m)/2)^d}{m_0+m} & \leq \sum_{m=0}^\infty \frac{1}{ 2^{m (2\alpha-1)}  } \left(\frac{1}{m_0+m} + \frac{1}{2} \right) \left( \frac{1}{m_0} + \frac{1}{2}+ \frac{m}{2 m_0} \right)^{d-1} \\ 
 & \leq \frac{3^d}{2^d}\sum_{m=0}^\infty \frac{ (1 + m)^{d-1}}{ 2^{m (2\alpha-1)}  }   \\
 & \leq \frac{3^d(d-1)!}{2^d}  \sum_{m=0}^\infty \frac{1}{ 2^{m (2\alpha-1)}  }\binom{m+d-1}{d-1} \\ 
 & = \frac{ 3^d (d-1)! }{2^d(1- 2^{-2\alpha+1})^d}. 
 \end{align*}
 Our conclusion follows by putting the above bounds into \eqref{eqn:L2errordecomp}.
\end{proof}

\begin{remark}
If we set the number of repetitions $R$ to be proportional to $\log N$, the failure probabilities $\epsilon_1$ and $\epsilon_2$ in Theorem~\ref{thm:infty} and Theorem~\ref{thm:2} vanish at the rate
\[ \mathcal{O}\left( \frac{N}{(\log N)^{\log N}}\right).\]
This decay rate is super-polynomial; that is, it decays faster than any fixed power of $N^{-1}$. Consequently, the total number of function evaluations, which is of order $N\log N$, is asymptotically sufficient for the median lattice algorithm to achieve high reliability.

Furthermore, the median lattice algorithm can also be viewed as a variant of multiple lattice-based algorithms, as the generating vectors $\bsz_r$ for which the median aggregation is attained can be different for each frequency $\bsh\in \Acal_{d,\alpha,\bsgamma}(N_2)$. While similar probabilistic strategies employing multiple lattices have been proposed in \cite{CG26,K19, KV19}, the median lattice approach offers a distinct advantage: it eliminates the need for an explicit verification step to detect aliasing terms across all frequencies in $\mathcal{A}_{d,\alpha,\boldsymbol{\gamma}}(N_2)$. Instead, the median operation ensures that the approximations of all Fourier coefficients are simultaneously free from aliasing issues with high probability, simplifying the implementation and reducing the computational overhead.
\end{remark}

\begin{remark}
In \eqref{ineq_min}, we ignored the second expression in the minimum in the line above. Using this second term instead yields an $L_2$ approximation error bound of the same form as the $L_\infty$ bound obtained in Theorem~\ref{thm:infty}. Since the $L_2$ approximation error is bounded above by the $L_\infty$ approximation error, this approach does not lead to a new result.
\end{remark}

\subsection{$L_p$-approximation}\label{sec_Lp}

We prove Theorem~\ref{thm:main} in this section. Since the $L_p$-approximation error for $1 \le p \le 2$ is bounded by the $L_2$-approximation error and we can estimate $(1+\log_2 N_2)^{d-1}$ by $C_{d,\beta} N^{\beta}$ for any $\beta > 0$, this part follows from Theorem~\ref{thm:2}. The bound independent of the dimension for $p=\infty$ follows directly from Theorem~\ref{thm:infty}.

It remains to prove the result for $2 < p < \infty$. We use the inequality $\|g\|_{L_p} \le \|g\|_{L_2}^{2/p} \|g\|_{L_\infty}^{1-2/p}$, which holds for $g \in L_2 \cap L_\infty$. Since every function $f$ in the 
weighted periodic Korobov space \( \Hcal_{d,\alpha,\bsgamma}\) is bounded and continuous, we have $f \in L_\infty$. Further, $A(f) \in L_\infty$ and so the norm inequality can be applied. 

As above, we estimate $(1+\log_2 N_2)^{d-1} \le C_{d,\beta} N^\beta$ for any $\beta > 0$. From Theorem~\ref{thm:infty} we get the error bound $\|f - A(f) \|_{L_\infty} \le C N^{-\alpha + 1/2 + \beta}$, and from Theorem~\ref{thm:2} we get the error bound $\|f - A(f)\|_{L_2} \le C N^{-\alpha + \beta}$. Applying the norm interpolation inequality, we obtain the bound
\begin{equation*}
\|f- A(f)\|_{L_p} \le C N^{(-\alpha + \beta) 2/p} N^{(-\alpha + 1/2 + \beta) (1- 2/p)} 
= C N^{-\alpha + \frac12 - \frac1p + \beta},
\end{equation*}
and hence
\begin{equation*}
\mathrm{err}(\Hcal_{d,\alpha,\bsgamma}, L_p, A) \le C N^{-\alpha + \frac12 - \frac1p + \beta}, \quad 2 \le p \le \infty.
\end{equation*}
Thus, the result follows.

\section*{Acknowledgements}

We, the authors of this paper, have all been greatly inspired by the work of Henryk Wo\'{z}niakowski, and some of us have had the privilege to cooperate with him. We would like to thank Henryk for his numerous contributions to Information-Based Complexity and related fields, for his support, and---what is most important---his friendship. 

The fourth author acknowledges the support of the Japan Society for the Promotion of Science (JSPS) KAKENHI Grant Number 23K03210. The fifth author acknowledges the support of the Austrian Science Fund (FWF) Project  P 34808/Grant DOI: 10.55776/P34808. For open access purposes, the authors have applied a CC BY public copyright license to any author accepted manuscript version arising from this submission.


\begin{thebibliography}{99}

 \bibitem{BGKS24}
Felix Bartel, Alexander~D. Gilbert, Frances~Y. Kuo, and Ian~H. Sloan.
\newblock Minimal subsampled rank-1 lattices for multivariate approximation
  with optimal convergence rate.
\newblock {\em arXiv preprint arXiv:2506.07729}, 2025.

\bibitem{BKTV17}
Glenn Byrenheid, Lutz K\"{a}mmerer, Tino Ullrich, and Toni Volkmer.
\newblock Tight error bounds for rank-1 lattice sampling in spaces of hybrid
  mixed smoothness.
\newblock {\em Numer. Math.}, 136(4):993--1034, 2017.

\bibitem{CDG25}
Mou Cai, Josef Dick, and Takashi Goda.
\newblock A lattice algorithm with multiple shifts for function approximation
  in {K}orobov spaces.
\newblock {\em arXiv:2511.09071}, 2025.

\bibitem{CG26}
Mou Cai and Takashi Goda.
\newblock A note on approximation in weighted {K}orobov spaces via multiple
  rank-1 lattices.
\newblock {\em arXiv preprint arXiv:2601.20290}, 2026.

\bibitem{CKNS20}
Ronald Cools, Frances~Y. Kuo, Dirk Nuyens, and Ian~H. Sloan.
\newblock Lattice algorithms for multivariate approximation in periodic spaces
  with general weight parameters.
\newblock In {\em 75 years of {M}athematics of {C}omputation}, volume 754 of
  {\em Contemp. Math.}, pages 93--113. Amer. Math. Soc., RI, 2020.

\bibitem{CKNS21}
Ronald Cools, Frances~Y. Kuo, Dirk Nuyens, and Ian~H. Sloan.
\newblock Fast component-by-component construction of lattice algorithms for
  multivariate approximation with {POD} and {SPOD} weights.
\newblock {\em Math. Comp.}, 90(328):787--812, 2021.

\bibitem{D04}
Josef Dick.
\newblock On the convergence rate of the component-by-component construction of
  good lattice rules.
\newblock {\em J. Complexity}, 20(4):493--522, 2004.

\bibitem{DG21}
Josef Dick and Takashi Goda.
\newblock Stability of lattice rules and polynomial lattice rules constructed
  by the component-by-component algorithm.
\newblock {\em J. Comput. Appl. Math.}, 382:Paper No. 113062, 16, 2021.

\bibitem{DGS22}
Josef Dick, Takashi Goda, and Kosuke Suzuki.
\newblock Component-by-component construction of randomized rank-1 lattice
  rules achieving almost the optimal randomized error rate.
\newblock {\em Math. Comp.}, 91(338):2771--2801, 2022.

\bibitem{DKP22}
Josef Dick, Peter Kritzer, and Friedrich Pillichshammer.
\newblock {\em Lattice {R}ules---{N}umerical {I}ntegration, {A}pproximation,
  and {D}iscrepancy}, volume~58 of {\em Springer Series in Computational
  Mathematics}.
\newblock Springer, Cham, 2022.

\bibitem{DKS13}
Josef Dick, Frances~Y. Kuo, and Ian~H. Sloan.
\newblock High-dimensional integration: the quasi-{M}onte {C}arlo way.
\newblock {\em Acta Numer.}, 22:133--288, 2013.

\bibitem{DSWW06}
Josef Dick, Ian~H. Sloan, Xiaoqun Wang, and Henryk Wo\'zniakowski.
\newblock Good lattice rules in weighted {K}orobov spaces with general weights.
\newblock {\em Numer. Math.}, 103(1):63--97, 2006.

\bibitem{G26}
Takashi Goda.
\newblock A randomized lattice rule without component-by-component
  construction.
\newblock {\em Math. Comp.}, 95(357):339--361, 2026.

\bibitem{GK26}
Takashi Goda and David Krieg.
\newblock A simple universal algorithm for high-dimensional integration.
\newblock {\em Numer. Math.}, 158(1):229--248, 2026.

\bibitem{GL22}
Takashi Goda and Pierre L'Ecuyer.
\newblock Construction-free median quasi--{M}onte {C}arlo rules for function
  spaces with unspecified smoothness and general weights.
\newblock {\em SIAM J. Sci. Comput.}, 44(4):A2765--A2788, 2022.

\bibitem{GSM24}
Takashi Goda, Kosuke Suzuki, and Makoto Matsumoto.
\newblock A universal median quasi--{M}onte {C}arlo integration.
\newblock {\em SIAM J. Numer. Anal.}, 62(1):533--566, 2024.

\bibitem{HW63}
Loo~Keng Hua and Yuan Wang.
\newblock {\em Applications of {N}umber {T}heory to {N}umerical {A}nalysis}.
\newblock Springer-Verlag, Berlin-New York; Kexue Chubanshe (Science Press),
  Beijing, 1981.
\newblock Translated from the Chinese.

\bibitem{K19}
Lutz K\"{a}mmerer.
\newblock Constructing spatial discretizations for sparse multivariate
  trigonometric polynomials that allow for a fast discrete {F}ourier transform.
\newblock {\em Appl. Comput. Harmon. Anal.}, 47(3):702--729, 2019.

\bibitem{KPV15}
Lutz K\"{a}mmerer, Daniel Potts, and Toni Volkmer.
\newblock Approximation of multivariate periodic functions by trigonometric
  polynomials based on sampling along rank-1 lattice with generating vector of
  {K}orobov form.
\newblock {\em J. Complexity}, 31(3):424--456, 2015.

\bibitem{KV19}
Lutz K\"{a}mmerer and Toni Volkmer.
\newblock Approximation of multivariate periodic functions based on sampling
  along multiple rank-1 lattices.
\newblock {\em J. Approx. Theory}, 246:1--27, 2019.

\bibitem{KPUU25}
David Krieg, Kateryna Pozharska, Mario Ullrich, and Tino Ullrich.
\newblock Sampling recovery in {$L_2$} and other norms.
\newblock {\em Math. Comp.}, Published Online, 2025.

\bibitem{KKNU19}
Peter Kritzer, Frances~Y. Kuo, Dirk Nuyens, and Mario Ullrich.
\newblock Lattice rules with random {$n$} achieve nearly the optimal
  {$\mathcal{O}(n^{-\alpha-1/2})$} error independently of the dimension.
\newblock {\em J. Approx. Theory}, 240:96--113, 2019.

\bibitem{KNR19}
Robert~J. Kunsch, Erich Novak, and Daniel Rudolf.
\newblock Solvable integration problems and optimal sample size selection.
\newblock {\em J. Complexity}, 53:40--67, 2019.

\bibitem{K03}
F.~Y. Kuo.
\newblock Component-by-component constructions achieve the optimal rate of
  convergence for multivariate integration in weighted {K}orobov and {S}obolev
  spaces.
\newblock {\em J. Complexity}, 19(3):301--320, 2003.

\bibitem{KMNN21}
Frances~Y. Kuo, Giovanni Migliorati, Fabio Nobile, and Dirk Nuyens.
\newblock Function integration, reconstruction and approximation using rank-1
  lattices.
\newblock {\em Math. Comp.}, 90(330):1861--1897, 2021.

\bibitem{KNW23}
Frances~Y. Kuo, Dirk Nuyens, and Laurence Wilkes.
\newblock Random-prime-fixed-vector randomised lattice-based algorithm for
  high-dimensional integration.
\newblock {\em J. Complexity}, 79:Paper No. 101785, 28, 2023.

\bibitem{KSW06}
Frances~Y. Kuo, Ian~H. Sloan, and Henryk Wo\'zniakowski.
\newblock Lattice rules for multivariate approximation in the worst case
  setting.
\newblock In {\em Monte {C}arlo and quasi-{M}onte {C}arlo methods 2004}, pages
  289--330. Springer, Berlin, 2006.

\bibitem{LH03}
Dong Li and Fred~J. Hickernell.
\newblock Trigonometric spectral collocation methods on lattices.
\newblock In {\em Recent {A}dvances in {S}cientific {C}omputing and {P}artial
  {D}ifferential {E}quations ({H}ong {K}ong, 2002)}, volume 330 of {\em
  Contemp. Math.}, pages 121--132. Amer. Math. Soc., Providence, RI, 2003.

\bibitem{N92}
Harald Niederreiter.
\newblock {\em Random {N}umber {G}eneration and {Q}uasi-{M}onte {C}arlo
  {M}ethods}, volume~63 of {\em CBMS-NSF Regional Conference Series in Applied
  Mathematics}.
\newblock Society for Industrial and Applied Mathematics (SIAM), Philadelphia,
  PA, 1992.

\bibitem{NP09}
Wojciech Niemiro and Piotr Pokarowski.
\newblock Fixed precision {MCMC} estimation by median of products of averages.
\newblock {\em J. Appl. Probab.}, 46(2):309--329, 2009.

\bibitem{NW08}
Erich Novak and Henryk Wo\'zniakowski.
\newblock {\em Tractability of {M}ultivariate {P}roblems. {V}ol. 1: {L}inear
  {I}nformation}, volume~6 of {\em EMS Tracts in Mathematics}.
\newblock European Mathematical Society (EMS), Z\"urich, 2008.

\bibitem{NC06}
Dirk Nuyens and Ronald Cools.
\newblock Fast algorithms for component-by-component construction of rank-1
  lattice rules in shift-invariant reproducing kernel {H}ilbert spaces.
\newblock {\em Math. Comp.}, 75(254):903--920, 2006.

\bibitem{P26}
Zexin Pan.
\newblock Automatic optimal-rate convergence of randomized nets using
  median-of-means.
\newblock {\em Math. Comp.}, 95(359):1415--1446, 2026.

\bibitem{PGK25}
Zexin Pan, Takashi Goda, and Peter Kritzer.
\newblock Universal {$L_2$}-approximation using median lattice algorithms.
\newblock {\em arXiv preprint arXiv:2509.24582}, 2025.

\bibitem{PKG25}
Zexin Pan, Peter Kritzer, and Takashi Goda.
\newblock {$L_2$}-approximation using median lattice algorithms.
\newblock {\em arXiv preprint arXiv:2501.15331}, 2025.

\bibitem{PO23}
Zexin Pan and Art~B. Owen.
\newblock Super-polynomial accuracy of one dimensional randomized nets using
  the median of means.
\newblock {\em Math. Comp.}, 92(340):805--837, 2023.

\bibitem{PO24}
Zexin Pan and Art~B. Owen.
\newblock Super-polynomial accuracy of multidimensional randomized nets using
  the median-of-means.
\newblock {\em Math. Comp.}, 93(349):2265--2289, 2024.

\bibitem{SJ94}
I.~H. Sloan and S.~Joe.
\newblock {\em Lattice methods for multiple integration}.
\newblock Oxford Science Publications. The Clarendon Press, Oxford University
  Press, New York, 1994.

\bibitem{SR02}
I.~H. Sloan and A.~V. Reztsov.
\newblock Component-by-component construction of good lattice rules.
\newblock {\em Math. Comp.}, 71(237):263--273, 2002.

\bibitem{SW98}
Ian~H. Sloan and Henryk Wo\'zniakowski.
\newblock When are quasi-{M}onte {C}arlo algorithms efficient for
  high-dimensional integrals?
\newblock {\em J. Complexity}, 14(1):1--33, 1998.

\end{thebibliography}
\end{document}